\newcommand{\R}{{\mathbb R}}
\newcommand{\Z}{{\mathbb Z}}
\newcommand{\N}{{\mathbb N}}
\newcommand{\Sp}{{\mathbb S}}
\newcommand{\ds}{\displaystyle}
\newcommand{\no}{\nonumber}
\newcommand{\be}{\begin{eqnarray}}
\newcommand{\ben}{\begin{eqnarray*}}
\newcommand{\en}{\end{eqnarray}}
\newcommand{\enn}{\end{eqnarray*}}
\newcommand{\ba}{\backslash}
\newcommand{\pa}{\partial}
\newcommand{\ov}{\overline}
\newcommand{\I}{{\rm Im}}
\newcommand{\Rt}{{\rm Re}}
\newcommand{\G}{\Gamma}
\newcommand{\vep}{\varepsilon}
\newcommand{\om}{\omega}
\newcommand{\wid}{\widetilde}
\newcommand{\se}{\setminus}
\newcommand{\tb}{\textbf}
\newcommand{\ra}{\rightarrow}
\definecolor{hw}{rgb}{0,0,0}
\newtheorem{remark}[theorem]{Remark}
\begin{document}
\renewcommand{\theequation}{\arabic{section}.\arabic{equation}}

\title{\bf Uniqueness in inverse acoustic and electromagnetic scattering with phaseless near-field data
at a fixed frequency
}
\author{Xiaoxu Xu\thanks{Academy of Mathematics and Systems Science, Chinese Academy of Sciences,
Beijing 100190, China and School of Mathematical Sciences, University of Chinese
Academy of Sciences, Beijing 100049, China ({\tt xuxiaoxu14@mails.ucas.ac.cn})}
\and
Bo Zhang\thanks{LSEC, NCMIS and Academy of Mathematics and Systems Science, Chinese Academy of
Sciences, Beijing, 100190, China and School of Mathematical Sciences, University of Chinese
Academy of Sciences, Beijing 100049, China ({\tt b.zhang@amt.ac.cn})}
\and
Haiwen Zhang\thanks{NCMIS and Academy of Mathematics and Systems Science, Chinese Academy of Sciences,
Beijing 100190, China ({\tt zhanghaiwen@amss.ac.cn})}
}
\date{}

\maketitle

\begin{abstract}
This paper is concerned with uniqueness results in inverse acoustic and electromagnetic scattering problems
with phaseless total-field data at a fixed frequency.
Motivated by our previous work ({\em SIAM J. Appl. Math. \bf78} (2018), 1737-1753), 
where uniqueness results were proved for inverse acoustic scattering with phaseless far-field data 
generated by superpositions of two plane waves as the incident waves at a fixed frequency, 
in this paper, we use superpositions of two point sources as the incident fields at a fixed frequency 
and measure the modulus of the acoustic total-field (called phaseless acoustic near-field data) 
on two spheres enclosing the scatterers generated by such incident fields on the two spheres.
Based on this idea, we prove that the impenetrable bounded obstacle or the index of refraction of 
an inhomogeneous medium can be uniquely determined from the phaseless acoustic near-field data 
at a fixed frequency. 
Moreover, the idea is also extended to the electromagnetic case, and it is proved that 
the impenetrable bounded obstacle or the index of refraction of an inhomogeneous medium can be 
uniquely determined by the phaseless electric near-field data at a fixed frequency, that is,  
the modulus of the tangential component with the orientations $\bm e_\phi$ and $\bm e_\theta$, respectively, 
of the electric total-field measured on a sphere enclosing the scatters and generated by 
superpositions of two electric dipoles at a fixed frequency located on the measurement sphere and 
another bigger sphere with the polarization vectors $\bm e_\phi$ and $\bm e_\theta$, respectively.
As far as we know, this is the first uniqueness result for three-dimensional inverse electromagnetic 
scattering with phaseless near-field data. 
\end{abstract}

\begin{keywords}
Uniqueness, inverse acoustic scattering, inverse electromagnetic scattering, phaseless near-field, 
obstacle, inhomogeneous medium
\end{keywords}

\begin{AMS}
78A46, 35P25
\end{AMS}

\pagestyle{myheadings}
\thispagestyle{plain}
\markboth{X. Xu, B. Zhang, and H. Zhang}{Uniqueness in inverse scattering with
phaseless near-field data at a fixed frequency}

\section{Introduction}\label{sec1}

Inverse scattering problems occur in many applications such as radar, remote sensing, geophysics, 
medical imaging and nondestructive testing. These problems aim at reconstructing the unknown 
scatterers from the measurement data of the scattered waves. 
In the past decades, inverse acoustic and electromagnetic scattering problems with phased data have
been extensively studied mathematically and numerically. 
A comprehensive account of these studies can be found in the monographs \cite{ChenXD18,CK}.

In many practical applications, it is much harder to obtain data with accurate phase information
compared with just measuring the intensity (or the modulus) of the data, and thus it is
often desirable to study inverse scattering with phaseless data 
(see, e.g., \cite[Chapter 8]{ChenXD18} and the references quoted there).
In fact, inverse scattering problems with phaseless data have also been widely studied 
numerically over the past decades (see, e.g.
\cite{BaoLiLv2012,BaoZhang16,ChenXD18,CH16,CH17,CFH17,IK2011,KR16,Pan11,MOTL97,S16,ZZ01,ZZ02,ZZ03}
and the references quoted there).

Recently, uniqueness and stability results have also been established for inverse scattering with phaseless 
data (see, e.g. \cite{ACZ16,JLZ19a,JLZ19b,K14,K17,K17x,MH17,N15,N16,Yamamoto18,XZZ,XZZ2,XZZ3,ZG18}). 
For example, for point source incidence uniqueness results have been established in \cite{K14,K17} 
for inverse potential and acoustic medium scattering with the phaseless near-field data
generated by point sources placed on a sphere enclosing the scatterer and measured in a small ball 
centered at each source position for an interval of frequencies, and in \cite{N15} for inverse acoustic 
medium scattering with the phaseless near-field data measured on an annulus surrounding the scatterer 
at fixed frequency.

The purpose of this paper is to propose a new approach to establish uniqueness results for inverse 
acoustic scattering problems with phaseless total-field data at a fixed frequency. 
Motivated by our previous work \cite{XZZ}, where uniqueness results have been proved for inverse 
acoustic scattering with phaseless far-field data corresponding to superpositions of two plane waves 
as the incident fields at a fixed frequency, we consider to utilize the superposition of two point 
sources at a fixed frequency as the incident field.
However, the idea of proofs used in \cite{XZZ} can not be applied directly to the inverse
scattering problem with phaseless near-field data. This is due to the fact that our proofs
in \cite{XZZ} are based essentially on the limit of the normalized eigenvalues of the
far-field operators. To overcome this difficulty, we consider to use two spheres, which enclose 
the scatterers, as the locations of such incident fields and the measurement surfaces of the modulus 
of the acoustic total-field (the sum of the incident field and the scattered field).
In fact, many phase retrieval algorithms have been developed for inverse scattering problems with 
phaseless near-field data measured on two surfaces to ensure the reliability of the near-field 
phase reconstruction algorithms (see, e.g. \cite{I3E1,I3E3,I3E6}). 
Based on this idea, we prove that the impenetrable bounded obstacle or the index of refraction of the 
inhomogeneous medium can be uniquely determined from the phaseless total-field data at a fixed frequency. 
Note that the superposition of two point sources was also used in \cite{Yamamoto18} 
as the incident field to study uniqueness for phaseless inverse scattering problems.
Some related uniqueness results can be found in \cite{ZSGL19,ZWGL19}. 

The idea is also applied to phaseless inverse electromagnetic scattering which is more complicated 
than the acoustic case. In this case, the electric total field is a complex vector-valued function, 
so we need to define the phaseless data used in this paper.
In many applications (see, e.g. \cite{Brown,Pan11,Schmidt}), the phaseless near-field data 
are based on the measurement of the modulus of the tangential component of the electric total field 
on the measurement surface. Further, it has been elaborated in \cite{Hansen} that the measurement 
data are based on two tangential components of the electric field on the measurement sphere 
(see \cite[p.100]{Hansen}). Therefore, the phaseless near-field data used is 
the modulus of the tangential component in the orientations $\bm e_\phi$ and $\bm e_\theta$, respectively, 
of the electric total field measured on a sphere enclosing the scatters and generated 
by superpositions of two electric dipoles at a fixed frequency located on the measurement sphere 
and another bigger sphere with the polarizations $\bm e_\phi$ and $\bm e_\theta$, respectively.
Following a similar idea as in the acoustic case, we prove that the impenetrable bounded obstacle or
the refractive index of the inhomogeneous medium (under the condition that the magnetic permeability is
a positive constant) can be uniquely determined by the phaseless total-field data at a fixed frequency.
To the best of our knowledge, this is the first uniqueness result for three-dimensional inverse 
electromagnetic scattering with phaseless near-field data. 
It should be mentioned that our uniqueness results in this paper are based on parts of the PhD thesis \cite{Xu19}. 

The outline of this paper is as follows. The acoustic and electromagnetic scattering models considered 
are given in Section \ref{direct}. Sections \ref{acoustic} and \ref{em} are devoted to the uniqueness 
results for phaseless inverse acoustic and electromagnetic scattering problems, respectively.
Conclusions are given in Section \ref{con}.

\section{The direct scattering problems}\label{direct}
\setcounter{equation}{0}

We will introduce the acoustic and electromagnetic scattering models considered in this paper.
To this end, assume that $D$ is an open and bounded domain in $\R^3$ with a $C^2-$boundary $\pa D$
such that the exterior $\R^3\ba\ov D$ is connected. Assume further that $\ov{D}\subset B_{R_1}$,
where $B_{R_1}$ is a ball centered at the origin with radius $R_1>0$ large enough.

\subsection{The acoustic case}\label{s2.1}

In this paper, we consider the problem of acoustic scattering by an impenetrable obstacle or
an inhomogeneous medium in $\R^3$. We need the following fundamental solution to the three-dimensional
Helmholtz equation $\Delta w+k^2w=0$ in $\R^3$ with $k>0$:
\ben
\Phi_k(x,y):=\frac{e^{ik|x-y|}}{4\pi|x-y|},\;\;\;x,y\in\R^3,\;\;x\not=y.
\enn

For arbitrarily fixed $y\in\R^3\ba\ov{D}$ consider the time-harmonic ($e^{-i\omega t}$ time dependence)
point source
\ben
w^i:=w^i(x,y)=\Phi_k(x,y),\quad x\in\R^3\ba\ov{D},
\enn
which is incident on the obstacle $D$ from the unbounded part $\R^3\ba\ov{D}$,
where $k=\omega/c>0$ is the wave number, $\omega$ and $c$ are the wave frequency and speed in
the homogeneous medium in the whole space. Then the problem of scattering of the point source $w^i$
by the impenetrable obstacle $D$ is formulated as the exterior boundary value problem:
\be\label{he}
\Delta_xw^s(x,y)+k^2w^s(x,y)=0,&&\quad\;x\in\R^3\ba\ov{D},\\ \label{bc}
\mathscr Bw=0&&\quad\text{on}\;\;\pa D,\\ \label{rc}
\lim_{r\rightarrow\infty}r\left(\frac{\partial w^s}{\partial r}-ikw^s\right)=0,&&\quad r=|x|,
\en
where $w^s$ is the scattered field, $w:=w^i+w^s$ is the total field, and (\ref{rc}) is
the Sommerfeld radiation condition imposed on the scattered field $w^s$.
The boundary condition $\mathscr B$ in (\ref{bc}) depends on the physical property of the obstacles $D$:
\ben\left\{\begin{array}{ll}
{\mathscr B}w:=w\;\;\text{on}\;\;\pa D & \text{if $D$ is a sound-soft obstacle},\\
{\mathscr B}w:={\pa w}/{\pa\nu}+\eta w\;\;\text{on}\;\;\pa D & \text{if $D$ is an impedance obstacle},\\
{\mathscr B}w:=w\;\;\text{on}\;\;\G_D,\;\;{\mathscr B}w:={\pa w}/{\pa\nu}+\eta w\;\;\text{on}\;\;\G_I &
   \text{if $D$ is a partially coated obstacle},
\end{array}\right.
\enn
where $\nu$ is the unit outward normal to the boundary $\pa D$ and $\eta$ is the impedance function on $\pa D$
satisfying that $\I[\eta(x)]\geq0$ for all $x\in\pa D$ or $x\in\G_I$.
We assume that $\eta\in C(\pa D)$ or $\eta\in C(\G_I)$, that is, $\eta$ is continuous on $\pa D$ or $\G_I$.
When $\eta=0$, the impedance boundary condition becomes the Neumann boundary condition (a sound-hard obstacle).
For a partially coated obstacle, we assume that the boundary $\pa D$ has a Lipschitz dissection
$\pa D=\G_D\cup\Pi\cup\G_I$, where $\G_D$ and $\G_I$ are disjoint, relatively open subsets of $\pa D$ and
having $\Pi$ as their common boundary in $\pa D$ (see, e.g., \cite{CCM}).

The problem of scattering of the point source $w^i$ by an inhomogeneous medium is modeled as follows:
\be\label{he-n}
\Delta_xw^s(x,y)+k^2n(x)w^s(x,y)=k^2(1-n(x))w^i(x,y),&&\quad\;x\in\R^3,\\ \label{rc-n}
\lim_{r\rightarrow\infty}r\left(\frac{\partial w^s}{\partial r}-ikw^s\right)=0,&&\quad  r=|x|,
\en
where $w^s$ is the scattered field and $n$ in (\ref{he-n}) is the refractive index characterizing
the inhomogeneous medium.
%Similarly, we define the total field $w:=w^i+w^s$. In the acoustic medium scattering,
We assume that $n-1$ has compact support $\ov{D}$ and $n\in L^\infty(D)$
with $\Rt[n(x)]>0,\;\I[n(x)]\geq0$ for all $x\in D$.

The existence of a unique (variational) solution to the problems (\ref{he})-(\ref{rc}) and
(\ref{he-n})-(\ref{rc-n}) has been proved in \cite{CK,KG,CC,Kirsch}.
In particular, the scattered-field $w^s$ has the asymptotic behavior:
\ben
w^s(x,y)=\frac{e^{ik|x|}}{|x|}\left\{w^\infty(\hat{x},y)+\left(\frac{1}{|x|}\right)\right\},
\quad|x|\rightarrow\infty
\enn
uniformly for all observation directions $\hat{x}=x/|x|\in\Sp^2$, where $\Sp^2$ is the unit sphere
in $\R^3$ and $w^\infty(\hat{x},y)$ is the far-field pattern of $w^s$ which is an analytic function
of $\hat{x}\in\Sp^2$ for each $y\in\R^3\se\ov{D}$ (see, e.g., \cite[(2.13)]{CK}).

In this paper, we also consider the superposition of two point sources
\be\label{lsi}
w^i=w^i(x;y_1,y_2)=w^i(x,y_1)+w^i(x,y_2)=\Phi_k(x,y_1)+\Phi_k(x,y_2)
\en
as the incident field, where $y_1,y_2\in\R^3\se\ov{D}$ are the locations of the two point sources.
It then follows by the linear superposition principle that the corresponding scattered field
\be\label{lsp-s}
w^s(x;y_1,y_2)=w^s(x,y_1)+w^s(x,y_2)
\en
and the corresponding total field
\be\label{lsp-t}
w(x;y_1,y_2)=w(x,y_1)+w(x,y_2),
\en
where $w^s(x,y_j)$ and $w(x,y_j)$ are the scattered field and the total field corresponding
to the incident point source $w^i(x,y_j)$, respectively, $j=1,2$.

The {\em inverse acoustic obstacle (or medium) scattering problem} we consider in this paper
is to reconstruct the obstacle $D$ and its physical property (or the index of refraction $n$
of the inhomogeneous medium) from the phaseless total field $|w(x;y_1,y_2)|$
for $x,y_1,y_2$ on some spheres enclosing $D$ and the inhomogeneous medium.
% in $\R^3\se\ov{B_{R_1}}$.

\subsection{The electromagnetic case}\label{s2.2}

In this paper, we consider two electromagnetic scattering models, that is,
scattering by an impenetrable obstacle and scattering by an inhomogeneous medium.
We will consider the time-harmonic ($e^{-i\om t}$ time dependence) incident electric dipole
located at $y\in\R^3\se\ov{D}$ and described by the matrices $E^i(x,y)$ and $H^i(x,y)$ defined by
\ben
E^i(x,y)p:=\frac{i}{k}{\rm curl}_x{\rm curl}_x[p\Phi_k(x,y)],\;\;\;
H^i(x,y)p:={\rm curl}_x[p\Phi_k(x,y)],\;\;x\not=y,
\enn
for $x\in\R^3\se\ov{D}$, where $p\in\R^3$ is the polarization vector, $k:=\om/c>0$ is the wave number,
$\om$ and $c:=1/{\sqrt{\vep_0\mu_0}}$ are the wave frequency and speed in the homogeneous medium
in $\R^3\se\ov{D}$, respectively, and $\vep_0$ and $\mu_0$ are the electric permittivity and
the magnetic permeability of the homogeneous medium, respectively.
A direct calculation shows that for $x\neq y$,
\be\label{ele_ei}
E^i(x,y)&=&ik\Phi_k(x,y)I+\frac{i}{k}\nabla_x\nabla_x\Phi_k(x,y)\\ \no
&=&\frac ik\left\{\left[k^2+\left(ik-\frac1{|x-y|}\right)\frac1{|x-y|}\right]I
+\widehat{x-y}\cdot\widehat{x-y}^\top f(|x-y|)\right\}\Phi_k(x,y),
\en
where $I$ is a $3\times3$ identity matrix, $\nabla_x\nabla_x:=(\pa_{x_i}\pa_{x_j})_{3\times3}$, $\widehat{x-y}=({x-y})/{|x-y|}$ and $f(r):=3/{r^2}-{3ik}/r-k^2$.
Then the problem of scattering of the electric dipole $E^i$ and $H^i$ by the impenetrable obstacle $D$
can be modeled as the exterior boundary value problem:
\be\label{ele_e1}
{\rm curl}_xE^s-ikH^s=0& &\text{in}\;\;\R^3\se\ov{D},\\ \label{ele_e2}
{\rm curl}_xH^s+ikE^s=0& &\text{in}\;\;\R^3\se\ov{D},\\ \label{ele_bc}
\mathscr BE=0& &\text{on}\;\;\pa D,\\ \label{ele_rc}
\lim_{r\ra\infty}(H^s\times x-rE^s)=0,& &r=|x|,
\en
where $(E^s,H^s)$ is the scattered field, $E:=E^i+E^s$ and $H:=H^i+H^s$ are the electric total field
and the magnetic total field, respectively, and (\ref{ele_rc}) is the Silver--M\"uller radiation condition
which holds uniformly for all $\hat x\in\Sp^2$ and ensures the uniqueness of the scattered field.
The boundary condition $\mathscr B$ in (\ref{ele_bc}) depends on the physical property of the obstacle $D$,
that is, $\mathscr BE:=\nu\times E$ on $\pa D$ (called as the PEC condition) if $D$ is a perfect conductor,
where $\nu$ is the unit outward normal to the boundary $\pa D$,
$\mathscr BE:=\nu\times{\rm curl}E-i\lambda(\nu\times E)\times\nu$ on $\pa D$
if $D$ is an impedance obstacle, where $\lambda$ is the impedance function on $\pa D$, and
\ben
\mathscr BE:=\nu\times E\;\;\text{on}\;\;\G_D,\;\;\;
\mathscr BE:=\nu\times{\rm curl}E-i\lambda(\nu\times E)\times\nu\;\;\text{on}\;\;\G_I
\enn
if $D$ is a partially coated obstacle, where $\pa D$ has a Lipschitz dissection
$\pa D=\G_D\cup\Pi\cup\G_I$ with $\G_D$ and $\G_I$ being disjoint and relatively open subsets
of $\pa D$ and having $\Pi$ as their common boundary in $\pa D$ and $\lambda$ is the impedance
function on $\G_I$.
We assume throughout this paper that $\lambda\in C(\pa D)$ with $\lambda(x)\geq0$ for all $x\in\pa D$
or $\lambda\in C(\G_I)$ with $\lambda(x)\geq0$ for all $x\in\G_I$.

The problem of scattering of an electric dipole by an inhomogeneous medium is modeled 
as the medium scattering problem:
\be\label{ele_em1}
{\rm curl}_xE^s-ikH^s=0 &\text{in}\;\;\R^3\\ \label{ele_em2}
{\rm curl}_xH^s+ikn(x)E^s=ik(1-n(x))E^i &\text{in}\;\;\R^3\\ \label{ele_rcm}
\lim_{r\ra\infty}(H^s\times x-rE^s)=0, &r=|x|,
\en
where $(E^s,H^s)$ is the scattered field and $(E,H):=(E^i,H^i)+(E^s,H^s)$ is the total field.
The refractive index $n(x)$ in (\ref{ele_em2}) is given by
\ben
n(x):=\frac1{\vep_0}\left(\vep(x)+i\frac{\sigma(x)}\om\right).
\enn
In this paper, we assume the magnetic permeability $\mu=\mu_0$ to be a positive constant in
the whole space. We assume further that $n-1$ has a compact support $\ov{D}$
and $n\in C^{2,\gamma}(\R^3)$ for $0<\gamma<1$ with ${\rm Re}[n(x)]>0$ and ${\rm Im}[n(x)]\geq0$
for all $x\in D$.

The existence of a unique (variational) solution to the problems (\ref{ele_e1})--(\ref{ele_rc})
and (\ref{ele_em1})--(\ref{ele_rcm}) has been established in \cite{CCM,CCM2,CK}.
In particular, it is well known that the electromagnetic scattered field $E^s$
has the asymptotic behavior:
\ben
E^s(x,y)p=\frac{e^{ik|x|}}{|x|}\left\{E^\infty(\hat{x},y)p+\left(\frac{1}{|x|}\right)\right\},
\quad|x|\rightarrow\infty
\enn
uniformly for all observation directions $\hat{x}=x/|x|\in\Sp^2$, where $E^\infty(\hat{x},y)$
is the electric far-field pattern of $E^s$ which is an analytic function of $\hat{x}\in\Sp^2$
for each $y\in\R^3\se\ov{D}$ (see, e.g., \cite[(6.23)]{CK}). Because of the linearity of the
direct scattering problem with respect to the incident field, we can express the scattered
waves by matrices $E^s(x,d)$ and $H^s(x,d)$, the total waves by matrices $E(x,d)$ and $H(x,d)$,
and the far-field patterns by $E^\infty(\hat x,d)$ and $H^\infty(\hat x,d)$, respectively.

We will also consider the following superposition of two electric dipoles as the incident field:
\ben
E^i&=&E^i(x,y_1,p_1,\tau_1,y_2,p_2,\tau_2):=\tau_1E^i(x,y_1)p_1+\tau_2E^i(x,y_2)p_2\\
   &=&\frac ik{\rm curl}_x{\rm curl}_x[\tau_1p_1\Phi_k(x,y_1)+\tau_2p_2\Phi_k(x,y_2)],\\
H^i&=&H^i(x,y_1,p_1,\tau_1,y_2,p_2,\tau_2):=\tau_1H^i(x,y_1)p_1+\tau_2H^i(x,y_2)p_2\\
   &=&{\rm curl}_x[\tau_1p_1\Phi_k(x,y_1)+\tau_2p_2\Phi_k(x,y_2)],
\enn
where $x\in\R^3$, $y_1,y_2\in\R^3\se\ov{D}$, $x\neq y_1$, $x\neq y_2$, $p_1,p_2\in\R^3$
and $\tau_1,\tau_2\in\{0,1\}$. By the linear superposition principle, the electric total
field and scattered field corresponding to the superposition of two electric dipoles as
the incident field satisfy
\[E^s(x,y_1,p_1,\tau_1,y_2,p_2,\tau_2):=\tau_1E^s(x,y_1)p_1+\tau_2E^s(x,y_2)p_2\]
and
\be\label{ele_lst}
E(x,y_1,p_1,\tau_1,y_2,p_2,\tau_2):=\tau_1E(x,y_1)p_1+\tau_2E(x,y_2)p_2,
\en
where $E^s(x,y_j)p_j$ and $E(x,y_j)p_j$ are the electric scattered field and the electric
total field corresponding to the incident field $E^i(x,y_j)p_j$, respectively, $j=1,2$.

Following \cite{Hansen,Pan11,Schmidt}, we measure the modulus of the tangential component of
the electric total field on a sphere $\pa B_r$ centered at the origin with radius $r>0$.
To represent the tangential components, we introduce the following spherical coordinate
\ben
\begin{cases}
x_1=r\sin\theta\cos\phi,\\
x_2=r\sin\theta\sin\phi,\\
x_3=r\cos\theta,
\end{cases}
\enn
with $x:=(x_1,x_2,x_3)\in\R^3$ and $(r,\theta,\phi)\in[0,+\infty)\times[0,\pi]\times[0,2\pi)$.
For any $x\in\pa B_r\se\{N_r,S_r\}$, the spherical coordinate gives an one-to-one correspondence
between $x$ and $(r,\phi,\theta)$. Here, $N_r:=(0,0,r)$ and $S_r:=(0,0,-r)$ denote the north
and south poles of $\pa B_r$, respectively. If we define
\ben
\bm e_\phi(x):=(-\sin\phi,\cos\phi,0),\;\;\;\;
\bm e_\theta(x):=(\cos\theta\cos\phi,\cos\theta\sin\phi,-\sin\theta),
\enn
then $\bm e_\phi(x)$ and $\bm e_\theta(x)$ are two orthonormal tangential vectors of $\pa B_r$
at $x\notin\{N_r,S_r\}$. Now, we can represent our phaseless measurement data by
\[|\bm e_m(x)\cdot E(x,y_1,\bm e_n(y_1),\tau_1,y_2,\bm e_l(y_2),\tau_2)|\]
with $x,y_1,y_2\in\pa B_r\se\{N_r,S_r\}$, $x\neq y_1$, $x\neq y_2$, $m,n,l\in\{\phi,\theta\}$ and $\tau_1,\tau_2\in\{0,1\}$.

The \emph{inverse electromagnetic obstacle or medium scattering problem} we consider in this
paper is to reconstruct the obstacle $D$ and its physical property or the index of refraction $n$
of the inhomogeneous medium from the modulus of the tangential component of the electric total
field, $|\bm e_m(x)\cdot E(x,y_1,\bm e_n(y_1),\tau_1,y_2,\bm e_l(y_2),\tau_2)|$,
for all $x,y_1,y_2$ in some spheres enclosing $D$ or the inhomogeneous medium,
%$\pa B_r\se\{N_r,S_r\}$ with $\ov{D}\subset B_r$, $x\neq y_1$, $x\neq y_2$,
$m,n,l\in\{\phi,\theta\}$ and $\tau_1,\tau_2\in\{0,1\}$. The purpose of this paper is to
prove uniqueness results for the above inverse acoustic and electromagnetic scattering problems.

\section{Inverse acoustic scattering with phaseless total-field data}\label{acoustic}
\setcounter{equation}{0}

This section is devoted to the uniqueness results for inverse acoustic scattering with phaseless
total-field data at a fixed frequency measured on two spheres enclosing the scatterers 
(see Figures \ref{nearo} and \ref{nearm}).

Denote by $w_j^s$ and $w_j$ the scattered field and the total field,
respectively, associated with the impenetrable obstacle $D_j$ (or the refractive index $n_j$)
and corresponding to the incident field $w^i$, $j=1,2$.
Let $B_{R_2}$ denote the ball centered at the origin with
radius $R_2>R_1>0$ with $\pa B_{R_2}$ denoting the boundary of $B_{R_2}$.
By appropriately choosing $R_2>R_1>0$, it can be ensured that $k^2$ is not a Dirichlet eigenvalue
of $-\Delta$ in $B_{R_2}\se\ov{B_{R_1}}$. Here, $k^2$ is called a Dirichlet eigenvalue of
$-\Delta$ in a bounded domain $V$ if the the interior Dirichlet boundary value problem
\[\begin{cases}
\Delta u+k^2u=0 & \text{in}\;V,\\
u=0 & \text{on}\;\pa V
\end{cases}\]
has a nontrivial solution $u$. The above assumption on $R_1$ and $R_2$ can be easily satisfied
since the Dirichlet eigenvalues of $-\Delta$ in a bounded domain are discrete and satisfy the strong
monotonicity property \cite[Theorem 4.7]{L86} (see also the arguments in the proof of \cite[Theorem 5.2]{CK}).
Let $G$ denote the unbounded component of the complement of $D_1\cup D_2$.
Then we have the following global uniqueness results for the phaseless inverse scattering problems.
\begin{figure}[!ht]
\begin{minipage}[t]{0.5\linewidth}
\centering
\includegraphics[width=1\textwidth]{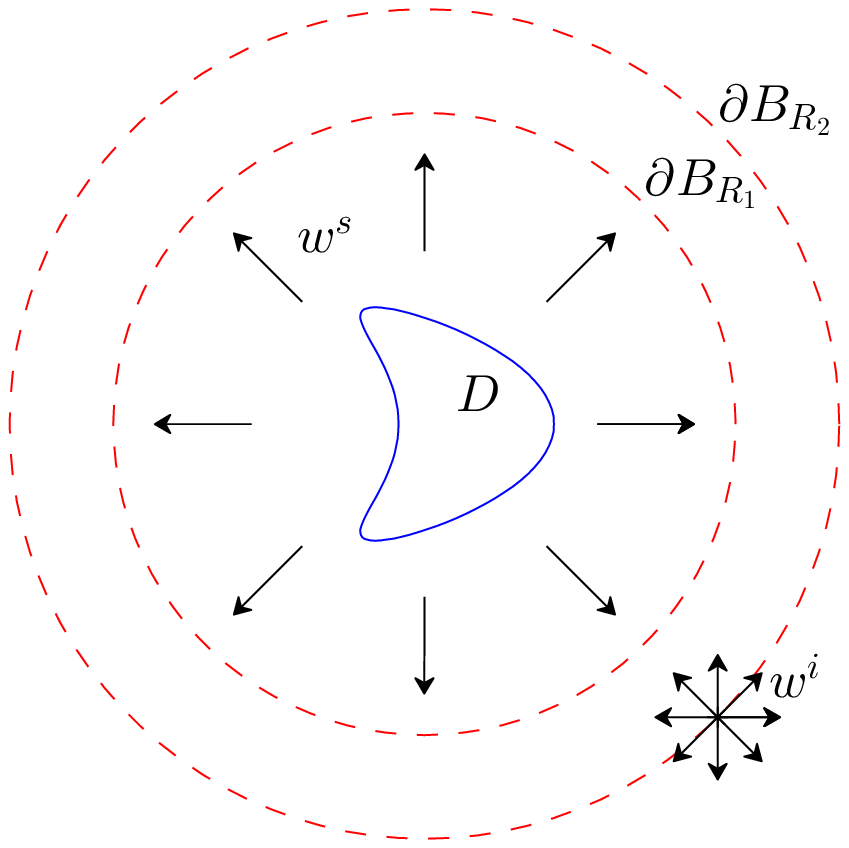}
\caption{Scattering by an obstacle.}\label{nearo}
\end{minipage}
%\hfill
\begin{minipage}[t]{0.5\linewidth}
\centering
\includegraphics[width=1\textwidth]{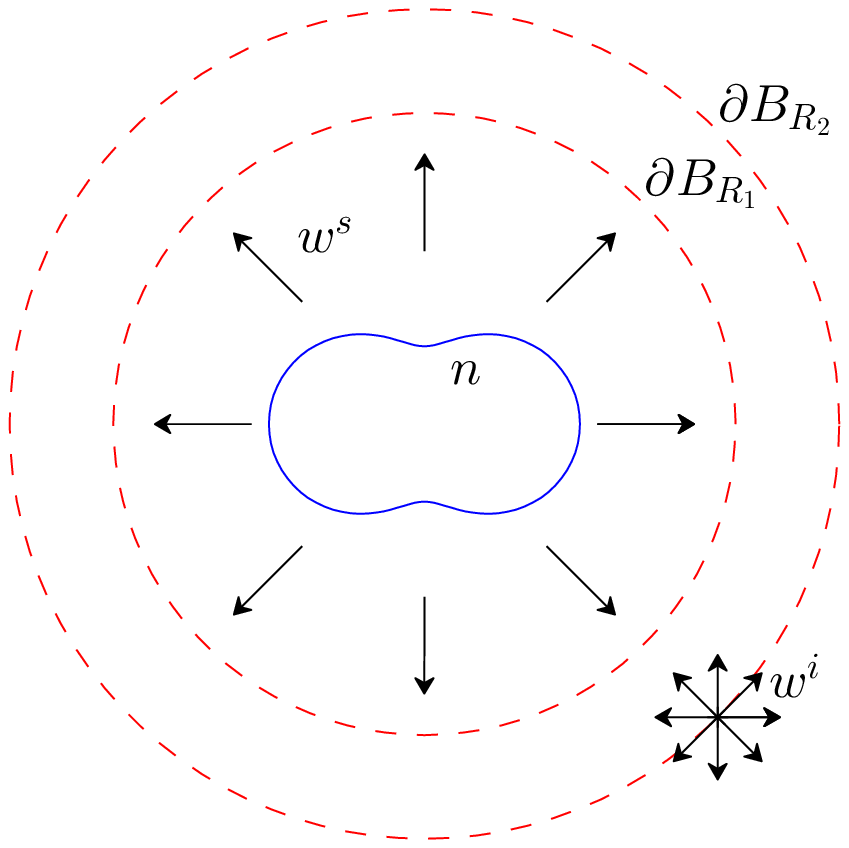}
\caption{Scattering by a medium.}\label{nearm}
\end{minipage}
\end{figure}

\begin{theorem}\label{tt}
Let $D_1,D_2$ be two bounded domains and let $R_2>R_1>0$ be large enough so that
$\ov{D_1\cup D_2}\subset B_{R_1}$. Assume that $k^2$ is not a Dirichlet eigenvalue of
$-\Delta$ in $B_{R_2}\ba\ov{B_{R_1}}$.

(a) Assume that $D_1$ and $D_2$ are two impenetrable obstacles with boundary conditions
$\mathscr B_1$ and $\mathscr B_2$, respectively. If the corresponding total fields satisfy
\be\no
|w_1(x,y)|=|w_2(x,y)|,\quad\forall(x,y)\in (\pa B_{R_1}\times \pa B_{R_1})
\cup(\pa B_{R_2}\times(\{y_0\}\cup\pa B_{R_2})),\;\;\\ \label{eq1}
x\neq y\;\;\;\quad
\en
and
\be\no
|w_1(x;y,y_0)|=|w_2(x;y,y_0)|,\;\;\forall(x,y)\in (\pa B_{R_1}\times \pa B_{R_1})
\cup(\pa B_{R_2}\times \pa B_{R_2}),\;\;\\ \label{eq2}
x\neq y,\;y_0\;\;\;
\en
for an arbitrarily fixed $y_0\in \pa B_{R_1}$, then $D_1=D_2$ and $\mathscr B_1=\mathscr B_2$.

(b) Assume that $n_1,n_2\in L^\infty(\R^3)$ are the indices of refraction of two
inhomogeneous media with $n_j-1$ supported in $\ov{D_j},\;j=1,2$.
If the corresponding total fields satisfy (\ref{eq1}) and (\ref{eq2}), then $n_1=n_2$.
\end{theorem}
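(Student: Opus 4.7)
The plan is to reduce the phaseless data to a full (phased) identification $w_1=w_2$ on the two measurement spheres, from which the obstacle and its boundary condition (or the index of refraction) are pinned down by classical near-field uniqueness results. Squaring the superposition formula (\ref{lsp-t}) gives
\begin{equation*}
|w_j(x;y,y_0)|^2-|w_j(x,y)|^2-|w_j(x,y_0)|^2
=2\,\Rt\bigl[w_j(x,y)\ov{w_j(x,y_0)}\bigr],
\end{equation*}
and subtracting the $j=1,2$ versions together with (\ref{eq1}) and (\ref{eq2}) shows that the sesquilinear quantity $V_j^{(x)}(y):=w_j(x,y)\ov{w_j(x,y_0)}$ satisfies $|V_1^{(x)}(y)|=|V_2^{(x)}(y)|$ and $\Rt V_1^{(x)}(y)=\Rt V_2^{(x)}(y)$ for every $x,y\in\pa B_{R_\ell}$, $\ell=1,2$, off the obvious diagonals.

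Pointwise this forces either $V_1^{(x)}(y)=V_2^{(x)}(y)$ or $V_1^{(x)}(y)=\ov{V_2^{(x)}(y)}$; continuity and the connectedness of each sphere minus at most two excluded points make the same alternative hold for all $y$, and in a second step for all $x$ on the same sphere. Taking the limit $y\to x$ along the sphere and using $w_j(x,y)\sim\Phi_k(x,y)$ with $\arg\Phi_k(x,y)\to 0$, the diagonal branch (Case~A) yields $w_1(x,y_0)=w_2(x,y_0)$ while the conjugate branch (Case~B) yields $w_1(x,y_0)=\ov{w_2(x,y_0)}$ at every $x$ on the corresponding sphere. In Case~A one can in addition rewrite $V_1^{(x)}\equiv V_2^{(x)}$ as $w_1(x,y)=\mu(x)w_2(x,y)$ with $|\mu(x)|=1$; letting $y\to x$ along the sphere with both sides blowing up like $\Phi_k(x,y)$ forces $\mu(x)=1$, and exterior Dirichlet uniqueness for outgoing Helmholtz solutions then upgrades $w_1=w_2$ on $\pa B_{R_\ell}$ to $w_1\equiv w_2$ throughout $\R^3\se\ov{B_{R_1}}$.

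I expect the main obstacle to be ruling out the conjugate branch; this is where the non-Dirichlet-eigenvalue assumption on the annulus $B_{R_2}\se\ov{B_{R_1}}$ enters. Suppose Case~B occurs on both spheres; then $w_1(\cdot,y_0)-\ov{w_2(\cdot,y_0)}$ solves the Helmholtz equation in the annulus with vanishing Dirichlet data on both boundary spheres, so the non-eigenvalue hypothesis forces it to vanish in the annulus, and unique continuation propagates $w_1(\cdot,y_0)=\ov{w_2(\cdot,y_0)}$ throughout the common unbounded exterior $G$. But $w_1(\cdot,y_0)$ satisfies the outgoing Sommerfeld condition while $\ov{w_2(\cdot,y_0)}$ satisfies its incoming analogue, so combining both radiation conditions for the same function gives $|x|\,w_1(x,y_0)\to 0$ at infinity; Rellich's lemma then forces $w_1(\cdot,y_0)\equiv 0$ in $G$, contradicting the $\Phi_k(\cdot,y_0)$ singularity at $y_0\in\pa G$. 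A mixed scenario (Case~A on one sphere and Case~B on the other) cannot occur either: Case~A on $\pa B_{R_\ell}$ propagates by the preceding paragraph to $w_1\equiv w_2$ in $\R^3\se\ov{B_{R_1}}$, which forces $V_1^{(x)}\equiv V_2^{(x)}$ on the other sphere too, conflicting with the conjugate-branch hypothesis there unless the data is degenerate (in which case Case~A is equally valid).

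Hence Case~A must hold on both spheres and $w_1(x,y)=w_2(x,y)$ for all $x,y\in\pa B_{R_\ell}$, $\ell=1,2$, off the diagonal. Subtracting the common point-source incident field, the scattered fields agree on both measurement surfaces, and by Rellich's lemma and unique continuation they agree throughout the common unbounded component $G$. The global identifications $D_1=D_2$ with $\mathscr B_1=\mathscr B_2$ in part~(a), and $n_1=n_2$ in part~(b), then follow from the standard obstacle- and medium-uniqueness theorems for phased point-source near-field scattered data (cf.\ \cite{CK,CCM,KG,Kirsch}).
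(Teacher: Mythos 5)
Your overall strategy coincides with the paper's: reduce the phaseless identities to equality of moduli together with $\Rt\{w_1(x,y)\ov{w_1(x,y_0)}\}=\Rt\{w_2(x,y)\ov{w_2(x,y_0)}\}$ (the paper's (\ref{real})), extract the dichotomy ``$w_1=w_2$ versus $w_1=\ov{w_2}$'' on each sphere, exclude the double-conjugate alternative via the non-eigenvalue assumption on the annulus plus the radiation condition and Rellich's lemma, and finish with the classical phased uniqueness theorems. Your way of fixing the unimodular factor --- letting $y\to x$ along the sphere in $w_1(x,y)\ov{w_1(x,y_0)}=w_2(x,y)\ov{w_2(x,y_0)}$ and using that $\Phi_k(x,y)$ blows up with asymptotically vanishing argument while the scattered fields stay bounded --- is a genuine and slightly slicker shortcut: the paper instead shows $w_1(x,y)=e^{i\alpha(x)}w_2(x,y)$, invokes the reciprocity relation $w_j^s(x,y)=w_j^s(y,x)$ to prove that $\alpha$ is constant, and only then uses the singularity at $x=y$ to force $e^{i\alpha}=1$.

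There is, however, one step whose justification as written would fail. You claim that the pointwise alternative ``$V_1^{(x)}(y)=V_2^{(x)}(y)$ or $V_1^{(x)}(y)=\ov{V_2^{(x)}(y)}$'' propagates to a single global alternative by ``continuity and connectedness''. That is not enough: for merely continuous functions the branch can switch across the set where $V_2^{(x)}(y)$ is real (take $V_1(t)=e^{i|t|}$ and $V_2(t)=e^{it}$ on an interval; they share modulus and real part, yet neither $V_1\equiv V_2$ nor $V_1\equiv\ov{V_2}$ holds). What rescues the argument is real-analyticity: $(x,y)\mapsto w_j(x,y)\ov{w_j(x,y_0)}$ is real-analytic off the diagonal, so the product $(V_1-V_2)(V_1-\ov{V_2})$ is a real-analytic function vanishing identically on a connected domain, whence one factor vanishes identically. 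This is precisely the content of the paper's Lemmas \ref{l1} and \ref{l2} together with the cosine identity (\ref{cos=})--(\ref{cos2}): one first locates open sets where the fields do not vanish so that the phases $\vartheta_j$ are analytic, deduces the alternative there, and then extends by analytic continuation. Once you replace ``continuity'' by this analyticity argument (and note, as the paper does, that $w_1(\cdot,y_0)-\ov{w_2(\cdot,y_0)}$ extends analytically across the source point $y_0\in\pa B_{R_1}$ because $\Phi_k-\ov{\Phi_k}$ is entire, so the Dirichlet data on $\pa B_{R_1}$ really is zero everywhere), your proof is complete; the discussion of the ``mixed scenario'' is superfluous, since Case~A on either one of the two spheres already suffices to conclude.
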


To prove Theorem \ref{tt}, we need the following lemmas on the property of the total field.

\begin{lemma}\label{l1}
Let $R_2>R_1>0$ and let $D$ be a bounded domain such that $\ov{D}\subset B_{R_1}$.
Suppose $w(x,y)$ is the total field of the obstacle scattering problem (\ref{he})-(\ref{rc})
or the medium scattering problem (\ref{he-n})-(\ref{rc-n}) associated with the point source $w^i(x,y)$.
Then, for any fixed $y_0\in \pa B_{R_1}$ we have
\be\label{01}
&&w(x,y_0)\not\equiv0,\;\;\;x\in\pa B_{R_1},\;\;\;x\neq y_0,\\ \label{02}
&&w(x,y_0)\not\equiv0,\;\;\;x\in\pa B_{R_2},\\ \label{03}
&&w(x,y)\not\equiv0,\;\;\;(x,y)\in\pa B_{R_2}\times\pa B_{R_2},\;\;\;x\neq y.
\en
\end{lemma}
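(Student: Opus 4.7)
The plan is a proof by contradiction that exploits, in each case, the singularity of the incident point source at its source point against the smoothness of the associated scattered field. Recall that the scattered field $w^s(\cdot,y)$ is real-analytic on the whole exterior $\R^3\se\ov D$; since $\ov D\subset B_{R_1}\subset B_{R_2}$, every source point $y\in\pa B_{R_1}\cup\pa B_{R_2}$ lies in this exterior, so $w^s(\cdot,y)$ is continuous at $y$ itself, while the total field $w(\cdot,y)=\Phi_k(\cdot,y)+w^s(\cdot,y)$ blows up like $1/(4\pi|\cdot-y|)$ there.

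To prove (\ref{01}), I would assume for contradiction that $w(x,y_0)\equiv 0$ on $\pa B_{R_1}\se\{y_0\}$. Then $w^s(x,y_0)=-\Phi_k(x,y_0)$ on this punctured sphere. Taking any sequence $x_n\in\pa B_{R_1}\se\{y_0\}$ with $x_n\to y_0$, the left-hand side stays bounded (by continuity of $w^s(\cdot,y_0)$ at $y_0\notin\ov D$), while $|\Phi_k(x_n,y_0)|=1/(4\pi|x_n-y_0|)\to\infty$, a contradiction. Part (\ref{03}) is the same argument applied to $\pa B_{R_2}\se\{y\}$ for each fixed $y\in\pa B_{R_2}$, using $\ov D\subset B_{R_1}\subset B_{R_2}$ to ensure $y\notin\ov D$.

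For (\ref{02}) the source $y_0\in\pa B_{R_1}$ lies in the interior of $B_{R_2}$, so the direct singularity argument is unavailable. Instead, assume $w(\cdot,y_0)\equiv 0$ on $\pa B_{R_2}$. In $\R^3\se\ov{B_{R_2}}$ the total field $w(\cdot,y_0)$ is a radiating solution of the homogeneous Helmholtz equation with vanishing Dirichlet data on $\pa B_{R_2}$, so the uniqueness of the exterior Dirichlet problem (which holds for every $k>0$) forces $w(\cdot,y_0)\equiv 0$ in $\R^3\se\ov{B_{R_2}}$. Since $w(\cdot,y_0)$ satisfies the homogeneous Helmholtz equation on $\Om:=\R^3\se(\ov D\cup\{y_0\})$ (using $n\equiv 1$ outside $\ov D$ in the medium case), it is real-analytic there, and $\Om$ is connected because $\R^3\se\ov D$ is connected by hypothesis and deleting a single point from an open set in $\R^3$ preserves connectedness. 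Analytic continuation then forces $w(\cdot,y_0)\equiv 0$ throughout $\Om$, contradicting $|w(x,y_0)|\to\infty$ as $x\to y_0$. The only mildly delicate ingredient is this analytic continuation step in the medium case, where $n\in L^\infty$ a priori prevents analyticity of $w$ across $\pa D$; but the continuation is carried out entirely inside $\Om\subset\R^3\se\ov D$, where the equation reduces to the standard Helmholtz equation, so no real difficulty arises.
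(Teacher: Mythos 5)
Your proof is correct and follows essentially the same route as the paper: (\ref{01}) and (\ref{03}) from the singularity of $\Phi_k(\cdot,y)$ at the source point against the boundedness of $w^s(\cdot,y)$ there, and (\ref{02}) by contradiction via uniqueness of the exterior Dirichlet problem outside $B_{R_2}$ followed by analytic continuation into $\R^3\se(\ov D\cup\{y_0\})$ and the same singularity clash at $y_0$. The only cosmetic difference is that the paper continues the identity $w^s(\cdot,y_0)=-\Phi_k(\cdot,y_0)$ whereas you continue the vanishing of the total field; these are identical arguments.
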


\begin{proof}
Since $w(x,y)$ is singular at $x=y_0$ or $y$, we know that (\ref{01}) and (\ref{03}) are true.

We now prove (\ref{02}). Assume to the contrary that $w(x,y_0)\equiv0$ for $x\in \pa B_{R_2}$,
that is, $w^s(x,y_0)=-\Phi_k(x,y_0)$ for $x\in \pa B_{R_2}$.
Then, by the uniqueness of the exterior Dirichlet problem it follows that $w^s(x,y_0)=-\Phi_k(x,y_0)$
for all $x\in\R^3\se\ov{B_{R_2}}$. Since the scattered field $w^s(x,y_0)$ is analytic for $x\in\R^3\se\ov{D}$
and $\Phi_k(x,y_0)$ is analytic for $x\in\R^3\se\{y_0\}$,
we have $w^s(x,y_0)=-\Phi_k(x,y_0)$ for all $x\in\R^3\se(\ov{D}\cup\{y_0\})$.
This is a contradiction since $\Phi_k(x,y_0)$ has a singularity at $x=y_0\in\pa B_{R_1}$ and
$w^s(x,y_0)$ is analytic when $x$ is in a neighbourhood of $y_0$. Thus, (\ref{02}) is true.
\end{proof}

\begin{lemma}\label{l2}
Under the assumption of Lemma $\ref{l1}$, we have the following results.

$(i)$ There exist two open sets $U_1,U_2\subset\pa B_{R_1}$ such that $U_1\cap U_2=\emptyset$
and $w(x,y)\neq0$ for all $(x,y)\in U_1\times U_2$.

$(ii)$ There exist two open sets $U'_1,U'_2\subset\pa B_{R_2}$ such that $U'_1\cap U'_2=\emptyset$
and $w(x,y)\neq0$ for all $(x,y)\in U'_1\times(U'_2\cup\{y_0\})$, where $y_0\in\pa B_{R_1}$.
\end{lemma}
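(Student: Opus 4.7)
The plan is to reduce both parts of the lemma to a common template: use Lemma~\ref{l1} to locate a pair (or point) at which the total field $w$ is nonzero, and then exploit the joint continuity of $w(x,y)$ away from the diagonal $\{x=y\}$ to open this nonvanishing up into a product of disjoint open sets. Real-analyticity of $w$ in each variable separately (valid on the complement of the source) is what allows us to upgrade the \emph{not identically zero} statements of Lemma~\ref{l1} into open sets on which $w$ is nowhere vanishing.

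For (i), the function $w(\cdot, y_0)$ is real-analytic on the connected two-dimensional analytic manifold $\pa B_{R_1} \se \{y_0\}$; by (\ref{01}) it is not identically zero, so its zero set there has empty interior. Pick $\tilde x \in \pa B_{R_1} \se \{y_0\}$ with $w(\tilde x, y_0) \neq 0$. Since $\tilde x \neq y_0$, disjoint open neighborhoods $V_1 \ni \tilde x$ and $V_2 \ni y_0$ in $\pa B_{R_1}$ exist; shrinking if necessary, joint continuity of $w$ off the diagonal yields $w(x,y) \neq 0$ for all $(x,y) \in V_1 \times V_2$, and we set $U_1 := V_1$, $U_2 := V_2$.

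For (ii), first note that $w(\cdot, y_0)$ is real-analytic on the entire connected manifold $\pa B_{R_2}$ (the source location $y_0 \in \pa B_{R_1}$ is separated from $\pa B_{R_2}$), and by (\ref{02}) it is not identically zero, so there is a nonempty open set $W_1 \subset \pa B_{R_2}$ on which $w(\cdot, y_0)$ is nowhere vanishing. Next, by (\ref{03}) one can fix some $\tilde y_0 \in \pa B_{R_2}$ such that $w(\cdot, \tilde y_0)$ is not identically zero on $\pa B_{R_2} \se \{\tilde y_0\}$. Since this latter manifold is connected and $w(\cdot, \tilde y_0)$ is real-analytic on it, its zero set has empty interior; hence the nonempty open set $W_1 \se \{\tilde y_0\}$ contains some $\tilde x$ with $w(\tilde x, \tilde y_0) \neq 0$. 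Because $\tilde x \neq \tilde y_0$, one can choose disjoint open neighborhoods $V'_1 \subset W_1$ of $\tilde x$ and $V'_2 \subset \pa B_{R_2}$ of $\tilde y_0$; shrinking them if necessary, joint continuity forces $w(x,y) \neq 0$ on $V'_1 \times V'_2$. Since $V'_1 \subset W_1$, the relation $w(x, y_0) \neq 0$ for $x \in V'_1$ is automatic, and setting $U'_1 := V'_1$, $U'_2 := V'_2$ completes (ii).

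The main point requiring care, and in my view the only non-routine step, is the real-analyticity of $w$ in each of its arguments. Analyticity in $x$ (with $y$ fixed) is standard, since $w$ solves the Helmholtz equation in $x$ on $\R^3 \se (\ov D \cup \{y\})$. Analyticity in $y$ (with $x$ fixed, $x \neq y$ and $x \notin \ov D$) follows either from the reciprocity relation $w(x,y) = w(y,x)$, valid for each boundary condition considered here as well as in the medium case, or directly from the integral representation of $w^s$. Once these are granted, the construction above is essentially mechanical.
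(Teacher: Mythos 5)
Your proposal is correct and follows essentially the same route as the paper's own proof: locate a nonvanishing value via Lemma~\ref{l1}, use real-analyticity (the identity theorem on the connected punctured sphere) to move that nonvanishing point into the prescribed open set, and then fatten it into a product of disjoint neighborhoods by joint continuity off the diagonal. The only cosmetic difference is that for part (i) you anchor $U_2$ at $y_0$ itself, which in fact yields the slightly stronger statement actually invoked later in the proof of Theorem~\ref{tt}.
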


\begin{proof}
We only prove (ii). The proof of (i) is similar.

By (\ref{02}) we know that for $y_0\in\pa B_{R_1}$ there exists $x_0\in\pa B_{R_2}$ such that $w(x_0,y_0)\neq0$.
Since $w(x,y)$ is continuous for $x,y\in\R^3\se\ov{D}$ with $x\neq y$, there exists a
neighbourhood $U'\subset \pa B_{R_2}$ of $x_0$ such that $w(x,y_0)\neq0$ for all $x\in U'$.
Further, since $w(x,y)$ is analytic with respect to $x\in \pa B_{R_2}$ and $y\in\pa B_{R_2}$,
respectively, when $x\neq y$, then it follows from (\ref{03}) that there exist two
points $x_1\in U'$ and $x_2\in \pa B_{R_2}$ such that $w(x_1,x_2)\neq0$ with $x_1\neq x_2$.
Finally, again by the continuity of $w(x,y)$ for $x,y\in\R^3\se\ov{D}$ with $x\neq y$,
there exists a neighbourhood $U'_1\subset U'$ of $x_1$ and a neighbourhood
$U'_2\subset\pa B_{R_1}$ of $x_2$ such that $U'_1\cap U'_2=\emptyset$ and $w(x,y)\neq0$
for all $(x,y)\in U'_1\times U'_2$. Thus, $w(x,y)\neq0$ for all
$(x,y)\in U'_1\times(U'_2\cup\{y_0\})$. This completes the proof.
\end{proof}

{\em Proof of Theorem $\ref{tt}$.}
From (\ref{lsp-t}) it is easy to see that (\ref{eq2}) is equivalent to the equation
\ben\label{m=}
|w_1(x,y)+w_1(x,y_0)|=|w_2(x,y)+w_2(x,y_0)|
\enn
for all $(x,y)\in(\pa B_{R_1}\times\pa B_{R_1})\cup(\pa B_{R_2}\times\pa B_{R_2})$
with $x\neq y,y_0$. This, together with (\ref{eq1}), implies that
\be\label{real}
\Rt\{w_1(x,y)\ov{w_1(x,y_0)}\}=\Rt\{w_2(x,y)\ov{w_2(x,y_0)}\}
\en
for all $(x,y)\in(\pa B_{R_1}\times\pa B_{R_1})\cup(\pa B_{R_2}\times\pa B_{R_2})$
with $x\neq y,y_0$. Define $r_j(x,y):=|w_j(x,y)|$, $j=1,2$. Then it follows from (\ref{eq1}) that
$r_1(x,y)=r_2(x,y)=:r(x,y)$, for all $x\in\pa B_{R_1},\;y\in\pa B_{R_1}\cup \pa B_{R_2}$ with $x\neq y$,
so we can write
\ben
w_j(x,y)=r(x,y)e^{i\vartheta_j(x,y)},\;\;\;\forall x,y\in\pa B_{R_1}\cup \pa B_{R_2},
\;\;x\neq y,\;\;j=1,2
\enn
with real-valued functions $\vartheta_j(x,y)$, $j=1,2$.

{\bf Case 1.} (\ref{real}) holds with $(x,y)\in\pa B_{R_1}\times\pa B_{R_1},\;x\neq y$.

Since $w_j^s(x,y),\;j=1,2,$ are analytic functions of $x\in\pa B_{R_1}$ and $y\in\pa B_{R_1}$,
respectively, and $\Phi_k(x,y)$ has a singularity at $x=y$, then, by Lemma \ref{l2}
we can choose two open sets $U_1,U_2\subset \pa B_{R_1}$ small enough so that
$U_1\cap U_2=\emptyset$, $r(x,y)\neq0$ for all $(x,y)\in U_1\times(U_2\cup{y_0})$, and
$\vartheta_j(x,y),\;j=1,2,$ are analytic with respect to $x\in U_1$ and $y\in U_2$, respectively.

Now, by (\ref{real}) we have
\be\label{cos=}
\cos[\vartheta_1(x,y)-\vartheta_1(x,y_0)]=\cos[\vartheta_2(x,y)-\vartheta_2(x,y_0)]
\en
for all $(x,y)\in U_1\times U_2$. Since $\vartheta_j(x,y),\;j=1,2,$ are real-valued
analytic functions of $x\in U_1$ and $y\in U_2$, respectively, we have either
\be\label{cos1}
\vartheta_1(x,y)-\vartheta_1(x,y_0)=\vartheta_2(x,y)-\vartheta_2(x,y_0)+2p\pi,\;\;\;
\forall (x,y)\in U_1\times U_2
\en
or
\be\label{cos2}
\vartheta_1(x,y)-\vartheta_1(x,y_0)=-[\vartheta_2(x,y)-\vartheta_2(x,y_0)]+2p\pi,\;\;\;
\forall (x,y)\in U_1\times U_2,
\en
where $p\in\Z$.

For the case when (\ref{cos1}) holds, we have
\ben
\vartheta_1(x,y)-\vartheta_2(x,y)=\vartheta_1(x,y_0)-\vartheta_2(x,y_0)
+2p\pi,\;\;\;\forall(x,y)\in U_1\times U_2.
\enn
This implies that $\alpha(x):=\vartheta_1(x,y)-\vartheta_2(x,y)
=\vartheta_1(x,y_0)-\vartheta_2(x,y_0)+2p\pi$ depends only on $x\in U_1$. Then it follows that
\ben
w_1(x,y)=r(x,y)e^{i\vartheta_1(x,y)}=r(x,y)e^{i\alpha(x)+i\vartheta_2(x,y)}=e^{i\alpha(x)}w_2(x,y)
\enn
for all $x\in U_1$ and $y\in U_2\cup\{y_0\}$. By the analyticity of $w_1(x,y)-e^{i\alpha(x)}w_2(x,y)$
in $y\in\pa B_{R_1}$ with $y\neq x$, we get
\be\label{*}
w_1(x,y)=e^{i\alpha(x)}w_2(x,y),\;\;\;\forall x\in U_1,y\in\pa B_{R_1},\;\;\;x\neq y.
\en
Changing the variables $x\rightarrow y$ and $y\rightarrow x$ in (\ref{*}) gives
\be\label{eq3}
w_1(y,x)=e^{i\alpha(y)}w_2(y,x),\;\;\;\forall x\in\pa B_{R_1},y\in U_1,\;\;\;x\neq y.
\en
Use (\ref{*}), (\ref{eq3}) and the reciprocity relation that $w^s_j(x,y)=w^s_j(y,x)$
for all $x,y\in\pa B_{R_1},\;j=1,2$ (see \cite[Theorem 3.17]{CK}) to give
\be\label{**}
e^{i\alpha(x)}w_2(x,y)=e^{i\alpha(y)}w_2(x,y),\;\;\;\forall x,y\in U_1\;\;\text{with}\;x\neq y.
\en
Since $w_j(x,y)$ has a singularity at $x=y$, and by (\ref{**}) and the analyticity of
$w_j(x,y)\;(j=1,2)$ with respect to $x\in\pa B_{R_1}$ and $y\in \pa B_{R_1}$, respectively,
with $x\neq y$, it follows that $e^{i\alpha(x)}=e^{i\alpha(y)}$ for all $x,y\in U_1$ with $x\neq y$.
%\ben
%e^{i\alpha(x)}=e^{i\alpha(y)},\;\;\;\forall x,y\in U_1\;\;\;\text{with}\;x\neq y.
%\enn
This means that $e^{i\alpha(x)}\equiv e^{i\alpha}$ for all $x\in U_1$, where $\alpha$ is a real constant.
Substituting this formula into (\ref{*}) gives that 
$w_1(x,y)=e^{i\alpha}w_2(x,y)$ for all $x\in U_1,y\in\pa B_{R_1}$ with $x\neq y$.
%\enn
%\ben
%w_1(x,y)=e^{i\alpha}w_2(x,y),\;\;\;\forall x\in U_1,y\in\pa B_{R_1}\;\;\;\text{with}\;x\neq y.
%\enn
Again, by the analyticity of $w_j(x,y)\;(j=1,2)$ with respect to $x\in\pa B_{R_1}$ with $x\neq y$ we have
\be\label{1}
w_1(x,y)=e^{i\alpha}w_2(x,y)\;\forall x,y\in \pa B_{R_1}\;\text{with}\;x\neq y,
\en
which gives
\be\label{sing}
w_1^s(x,y)-e^{i\alpha}w_2^s(x,y)=(e^{i\alpha}-1)\Phi_k(x,y),\;\;\;\forall x,y\in\pa B_{R_1}
\;\;\text{with}\;x\neq y.
\en
Since $w_j^s(x,y)$, $j=1,2,$ are analytic for $x\in G$ and $y\in G$, respectively, and $\Phi_k(x,y)$
has a singularity at $x=y$, then passing the limit $y\rightarrow x$ in (\ref{sing}) gives that
$e^{i\alpha}=1$, so
\be\label{rt1}
w_1^s(x,y)=w_2^s(x,y),\;\;\;\forall x,y\in\pa B_{R_1}.
\en

For the case when (\ref{cos2}) holds, a similar argument as above gives
\be\label{eq6}
w_1(x,y)=e^{i\beta}\ov{w_2(x,y)},\;\;\;\forall x,y\in\pa B_{R_1}\;\;\text{with}\;x\neq y
\en
for a real constant $\beta$, that is,
\ben
w_1^s(x,y)-e^{i\beta}\ov{w_2^s(x,y)}=e^{i\beta}\ov{\Phi_k(x,y)}-\Phi_k(x,y),\;\;\;
\forall x,y\in\pa B_{R_1}\;\;\text{with}\;x\neq y.
\enn
Since $w_j^s(x,y),$ $j=1,2,$ are analytic for $x\in G$ and $y\in G$, respectively,
$\Rt[\Phi_k(x,y)]$ has a singularity at $x=y$ and $\I[\Phi_k(x,y)]$ is analytic for all
$x,y\in\R^3$, then $e^{i\beta}=1$. Thus, it follows from (\ref{eq6}) that
\be\label{rt2}
w_1(x,y)=\ov{w_2(x,y)},\;\;\;\forall x,y\in\pa B_{R_1}\;\;\text{with}\;x\neq y.
\en

{\bf Case 2.} (\ref{real}) holds with $(x,y)\in\pa B_{R_2}\times \pa B_{R_2},\;x\neq y$.

By a similar argument as in Case 1, it can be obtained that there holds either
\be\label{rt3}
w_1^s(x,y)=w_2^s(x,y),\;\;\;\forall x\in\pa B_{R_2},\;y\in\pa B_{R_2}\cup\{y_0\}
\en
or
\be\label{rt4}
w_1(x,y)=\ov{w_2(x,y)},\;\;\;\forall x\in\pa B_{R_2},\;y\in\pa B_{R_2}\cup\{y_0\}\;\;\;\text{with}\;x\neq y.
\en

We now prove that both (\ref{rt2}) and (\ref{rt4}) can not hold simultaneously.
Suppose this is not the case. Then define $v(x):=w_1(x,y_0)-\ov{w_2(x,y_0)}$ for $x\in G$ with $x\neq y_0$.
Since $\Phi_k(x,y)-\ov{\Phi_k(x,y)}={i\sin(k|x-y|)}/{(2\pi|x-y|)}$ is analytic for all $x,y\in\R^3$,
then, by the analyticity of $w^s_j(x,y)\;(j=1,2)$ with respect to $x\in G$ it follows that
$v$ can be extended as an analytic function of $x\in G$, denoted by $v$ again.
Further, since ${i\sin(k|x-y|)}/{(2\pi|x-y|)}$ and $w^s_j(x,y)\;(j=1,2)$ as functions of $x$ satisfy
the Helmholtz equation $\Delta u+k^2u=0$ in $G$, we have by (\ref{rt2}) and (\ref{rt4}) that
$v$ satisfies the Dirichlet boundary value problem:
\ben
\left\{\begin{array}{ll}
\ds\Delta v+k^2v=0 & \text{in}\ B_{R_2}\ba\ov{B_{R_1}},\\
\ds v=0 & \text{on}\ \pa B_{R_1}\cup\pa B_{R_2}.
\end{array}\right.
\enn
From the assumption that $k^2$ is not a Dirichlet eigenvalue of $-\Delta$ in $B_{R_2}\ba\ov{B_{R_1}}$,
it is known that $v(x)=0$ for any $x\in B_{R_2}\ba\ov{B_{R_1}}$.
Thus $w_1(x,y_0)=\ov{w_2(x,y_0)}$ for all $x\in B_{R_2}\ba\ov{B_{R_1}}$ with $x\neq y_0$.
By the analyticity of $w_j(x,y_0)\;(j=1,2)$ with respect to $x\in G$ with $x\neq y_0$, we obtain
\be\label{conj}
w_1(x,y_0)=\ov{w_2(x,y_0)},\;\;\;\forall x\in G,\;x\neq y_0,
\en
which contradicts to the fact that $w_j(x,y_0)=\Phi_k(x,y_0)+w^s_j(x,y_0),\;j=1,2,$ satisfy the Sommerfeld
radiation condition. We then conclude that both (\ref{rt2}) and (\ref{rt4}) can not hold simultaneously.
This means that at least one of the formulas (\ref{rt2}) and (\ref{rt4}) does not hold.

If (\ref{rt2}) does not hold, then it follows that (\ref{rt1}) holds. By the reciprocity relation,
the well-posedness of the exterior Dirichlet problem and the analyticity of $w_j^s(x,y)\;(j=1,2)$ with
respect to $x\in G$ and $y\in G$, respectively, it is easily derived from (\ref{rt1}) that
\be\label{tcase1}
w_1^s(x,y)=w^s_2(x,y),\;\;\;\forall x,y\in G.
\en
Then, by \cite[Theorem 2.13]{CK} and the mixed reciprocity relation $4\pi w_j^\infty(-d,z)=u_j^s(z,d)$
for all $d\in\Sp^2$ and $z\in G,\;j=1,2$ (see \cite[Theorem 3.16]{CK}) it is obtained on applying (\ref{tcase1})
that
\be\label{far=}
u_1^\infty(\hat x,d)=u_2^\infty(\hat x,d),\;\;\;\forall\hat x,d\in\Sp^2,
\en
where $u_j^\infty$ is the far-field pattern associated with the obstacle $D_j$ (or the refractive index $n_j$)
and corresponding to the incident field $u^i(x,d)=e^{ikx\cdot d},\;j=1,2$.
Similarly, if (\ref{rt4}) does not hold, then (\ref{rt3}) holds and thus we can also show that (\ref{far=})
holds.

Finally, for the case with two impenetrable obstacles $D_1$ and $D_2$, by (\ref{far=}) and
\cite[Theorem 5.6]{CK} we have $D_1=D_2$ and $\mathscr B_1=\mathscr B_2$, while
for the case with two refractive indices $n_1$ and $n_2$, we have by (\ref{far=}) and
\cite[Theorem 6.26]{Kirsch} that $n_1=n_2$. Theorem \ref{tt} is thus proved.
\hfill $\Box$

\begin{remark}\label{re2.1} {\rm
(i) Theorem \ref{tt} (a) remains true for the two-dimensional case, and
the proof is similar.

(ii) Theorem \ref{tt} (b) also holds in two dimensions if the assumption
$n_1,n_2\in L^\infty(\R^3)$ is replaced by the condition that $n_j$ is piecewise
in $W^{1,p}(D_j)$ with $p>2,\;j=1,2$. In this case,
the proof is similar except that we need Bukhgeim's result in \cite{B} (see also
the theorem in Section 4.1 in \cite{Blasten}) instead of \cite[Theorem 6.26]{Kirsch} in the proof.

(iii) Theorem \ref{tt} (b) generalizes the uniqueness results in \cite{K14,K17,K17x,N15} substantially
in the sense that our uniqueness results only need the measurement data of the modulus of the total-field
on two spheres enclosing the inhomogeneous medium at a fixed frequency,
under no smoothness assumption on the refractive index,
instead of the measurement data in a ball for each point source in a sphere for an interval of frequencies
as used in \cite{K14,K17,K17x} or in an open domain for each point source in another open domain
at a fixed frequency as used in \cite{N15}.
}
\end{remark}

\section{Inverse electromagnetic scattering with phaseless electric total field data}\label{em}
\setcounter{equation}{0}

In this section, we extend the uniqueness results in Section \ref{acoustic} for the acoustic case
to the case of inverse electromagnetic scattering problems with phaseless electric total-field data
at a fixed frequency.
%
%\subsection{Inverse electromagnetic scattering with electric dipoles on two spheres}\label{ele}
%
In this case, we consider the measurement of the modulus of the tangential component of the electric
total-field on two spheres enclosing the scatterers, generated by superpositions of two electric
dipoles located also on the two spheres. Denote by $E_j,E_j^s,H^s_j$ and $H_j$ the electric scattered-field,
electric total-field, magnetic scattered-field and magnetic total-field, respectively, associated with the
obstacle $D_j$ (or the refractive index $n_j$) and corresponding to the incident electric field $E^i$,
$j=1,2$. Let $B_{R_2}$ denote the ball centered at the origin with radius $R_2>R_1>0$ with $\pa B_{R_2}$
denoting the boundary of $B_{R_2}$ and let $G$ denote the unbounded component of the complement of $D_1\cup D_2$.
Denote by $N_{R_j}$ and $S_{R_j}$ the north and south poles of $\pa B_{R_j}$, respectively, $j=1,2$.
See Figure \ref{em-near} for the geometry of the electromagnetic scattering problem.
\begin{figure}[!ht]
  \centering
  \vspace{-0.2cm}
  \subfigure[]{\includegraphics[width=0.5\textwidth]{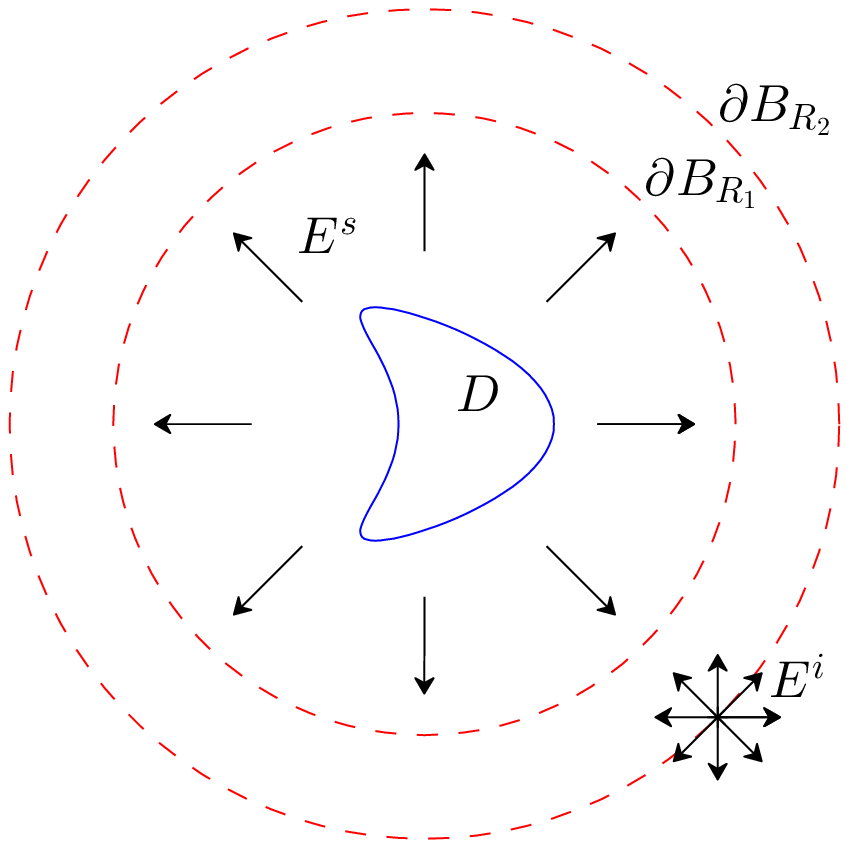}}
  \hspace{-0.5cm}
  \subfigure[]{\includegraphics[width=0.5\textwidth]{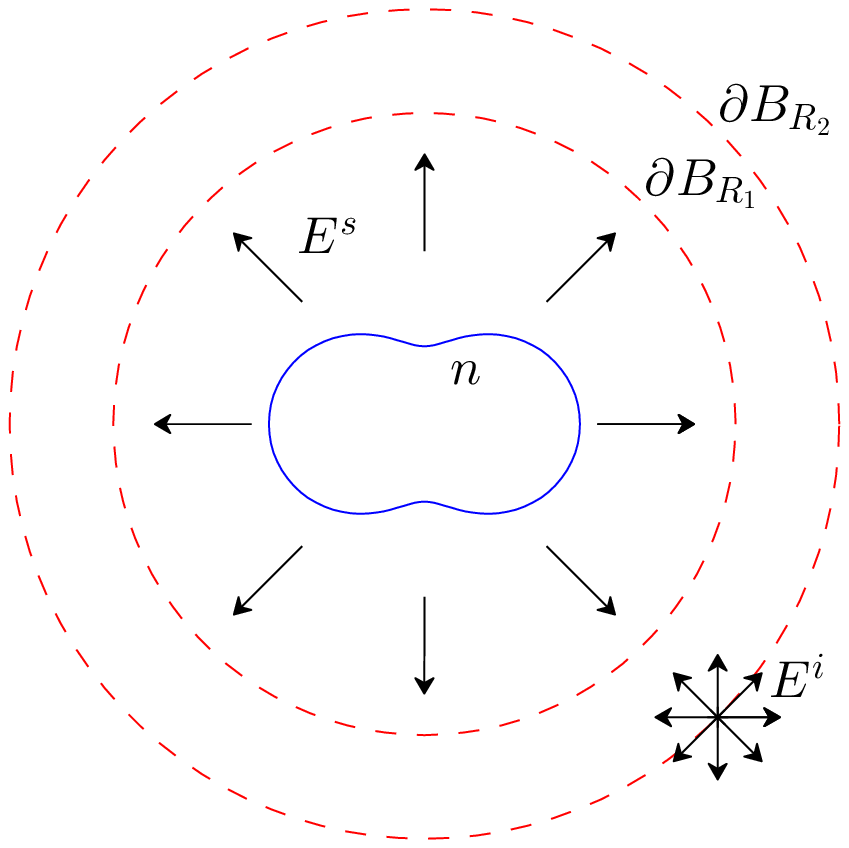}}
  \vspace{-0.3cm}
  \caption{Electromagnetic scattering by an obstacle (a) or a medium (b)}\label{em-near}
\end{figure}

By choosing appropriate $R_1$ and $R_2$ (see Lemma \ref{te}), it can be ensured that $k^2$ is not a Maxwell
eigenvalue in $B_{R_2}\se\ov{B_{R_1}}$. Here, $k^2$ is called a Maxwell eigenvalue in a bounded domain $V$
if the interior Maxwell problem
\ben
\begin{cases}
\ds {\rm curl}\,E-ikH=0 & \text{in}\;\;V\\
\ds {\rm curl}\,H+ikE=0 & \text{in}\;\;V\\
\ds \nu\times E=0 & \text{on}\;\;\pa V
\end{cases}
\enn
has a nontrivial solution $(E,H)$.

\begin{lemma}\label{te}
$k^2$ is not a Maxwell eigenvalue in $B_{R_2}\se\ov{B_{R_1}}$ if and only if
\be\no
\left|\begin{array}{cc}
j_n(kR_1) & y_n(kR_1)\\
j_n(kR_2) & y_n(kR_2)
\end{array}\right|\neq0,\;\;
%\\ \label{det+}
\left|\begin{array}{cc}
j_n(kR_1)+kR_1j'_n(kR_1) & y_n(kR_1)+kR_1y'_n(kR_1)\\
j_n(kR_2)+kR_2j'_n(kR_2) & y_n(kR_2)+kR_2y'_n(kR_2)
\end{array}\right|\neq0\\ \label{det}
\en
for all $n=1,2,\cdots$, where $j_n$ and $y_n$ are the spherical Bessel functions and
spherical Neumann functions of order $n$, respectively.
\end{lemma}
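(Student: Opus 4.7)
The plan is to decouple the interior Maxwell problem by expanding $E$ in a basis of vector spherical harmonics, thereby reducing the eigenvalue question to two families of scalar boundary value problems for $R(r)=Aj_n(kr)+By_n(kr)$—one family giving rise to each of the two determinants in (\ref{det}).

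First I would observe that $(E,H)$ is a nontrivial solution of the interior Maxwell problem on $B_{R_2}\setminus\overline{B_{R_1}}$ with $\nu\times E=0$ on the boundary if and only if $E\neq0$ satisfies $\operatorname{curl}\operatorname{curl}E-k^2E=0$, $\operatorname{div}E=0$, and $\nu\times E=0$ on $\partial B_{R_1}\cup\partial B_{R_2}$; $H$ is then recovered from $H=(ik)^{-1}\operatorname{curl}E$. I then expand $E$ in the complete orthogonal basis $\{\hat{x}Y_n^m,\nabla_{S^2}Y_n^m,\hat{x}\times\nabla_{S^2}Y_n^m\}$ of vector spherical harmonics. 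The divergence-free constraint and the vector Helmholtz equation decouple the expansion into TE modes, of the form $E_{\mathrm{TE}}=\operatorname{curl}(x\,R(r)Y_n^m)$, and TM modes, of the form $E_{\mathrm{TM}}=k^{-2}\operatorname{curl}\operatorname{curl}(x\,R(r)Y_n^m)$, with $R$ satisfying the scalar spherical Bessel equation; hence $R(r)=Aj_n(kr)+By_n(kr)$ in each case. The modes with $n=0$ are trivial since $\nabla_{S^2}Y_0^0=0$, so only $n\ge 1$ contributes.

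Next I would translate the PEC condition on each mode into a scalar condition on $R$ at $r=R_1,R_2$. Using
\[\operatorname{curl}(x\,R(r)Y_n^m)=-R(r)\,\hat{x}\times\nabla_{S^2}Y_n^m,\]
a direct computation gives $\hat{x}\times E_{\mathrm{TE}}=R(r)\nabla_{S^2}Y_n^m$, so the TE boundary condition is $R(R_1)=R(R_2)=0$, which has a nontrivial $(A,B)$-solution precisely when the first determinant in (\ref{det}) vanishes. For the TM part I would use the identity
\[\operatorname{curl}\operatorname{curl}(x\,R(r)Y_n^m)=\frac{n(n+1)}{r}R(r)\,\hat{x}Y_n^m+\frac{1}{r}\bigl(R(r)+rR'(r)\bigr)\nabla_{S^2}Y_n^m,\]
derived from $\operatorname{curl}\operatorname{curl}=\nabla\operatorname{div}-\Delta$ together with $\Delta(xu)=-k^2xu+2\nabla u$ and $\operatorname{div}(xu)=3u+r\partial_r u$. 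Since the radial component drops out of the tangential trace, $\hat{x}\times E_{\mathrm{TM}}$ is proportional to $R(r)+rR'(r)$, and the TM boundary condition becomes $R(R_j)+R_j R'(R_j)=0$ for $j=1,2$. Substituting $R'(r)=k[Aj_n'(kr)+By_n'(kr)]$ yields precisely the second determinant in (\ref{det}).

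Putting this together, the absence of a nontrivial $(E,H)$ is equivalent to the non-vanishing of both determinants in (\ref{det}) for every $n\ge1$, which proves the lemma. The main technical step is verifying the curl–curl identity above (and the fact that TE and TM modes furnish a complete decomposition of divergence-free solutions of the vector Helmholtz equation in a spherical shell); the remaining work is straightforward linear algebra on the $2\times 2$ systems determined by the radial coefficients at $R_1$ and $R_2$.
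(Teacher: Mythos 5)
Your proposal is correct and follows essentially the same route as the paper: both expand $E$ in vector spherical harmonics (TE/TM, i.e.\ $\operatorname{curl}$ and $\operatorname{curl}\operatorname{curl}$ modes with radial part $Aj_n+By_n$), compute the tangential trace on $\pa B_{R_1}$ and $\pa B_{R_2}$, and reduce the PEC condition to the two $2\times2$ systems whose determinants appear in (\ref{det}). The only cosmetic difference is that the paper quotes the expansion and trace formulas from Kirsch--Hettlich and Colton--Kress, while you rederive them from $\operatorname{curl}\operatorname{curl}=\nabla\operatorname{div}-\Delta$.
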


\begin{proof}
Assume that $(E,H)$ solves the interior Maxwell problem
\be\label{iMp}
\begin{cases}
{\rm curl}E-ikH=0 & \text{in}\;\;B_{R_2}\se\ov{B_{R_1}}\\
{\rm curl}H+ikE=0 & \text{in}\;\;B_{R_2}\se\ov{B_{R_1}}\\
\nu\times E=0 & \text{on}\;\;\pa B_{R_2}\cup\pa B_{R_1}.
\end{cases}
\en
A similar argument as in the proof of \cite[Theorems 2.48 and 2.50]{KH} gives the following expansion
in the spherical vector harmonics of the electric field $E$ in $\ov{B_{R_2}}\se B_{R_1}$ as a uniformly
convergent series:
\ben
&&E(x)\\
&&=\sum_{n=1}^\infty\sum_{m=-n}^na_n^m{\rm curl}\,\left\{xj_n(k|x|)Y_n^m(\hat x)\right\}
+\sum_{n=1}^\infty\sum_{m=-n}^nb_n^m{\rm curl}\,{\rm curl}\,\left\{xj_n(k|x|)Y_n^m(\hat x)\right\}\\
&&+\sum_{n=1}^\infty\sum_{m=-n}^nc_n^m{\rm curl}\,\left\{xy_n(k|x|)Y_n^m(\hat x)\right\}+\sum_{n=1}^\infty\sum_{m=-n}^nd_n^m{\rm curl}\,{\rm curl}\,
\left\{xy_n(k|x|)Y_n^m(\hat x)\right\},\\
&&\hspace{9cm}\qquad\;x\in\ov{B_{R_2}}\se B_{R_1},
\enn
where $Y_n^m$, $m=-n,\ldots,n,$ $n = 0, 1, 2,\ldots$, are the spherical harmonics.
By \cite[(6.71) and (6.72)]{CK}, we have that for any $x\in\pa B_r$ with $r\in[R_1,R_2]$,
\ben
&&\hat x\times E(x)\\
&&=\sum_{n=1}^\infty\sum_{m=-n}^na_n^mj_n(kr){\rm Grad}Y_n^m(\hat x)
+\sum_{n=1}^\infty\sum_{m=-n}^nb_n^m\frac1{r}\left\{j_n(kr)+krj'_n(kr)\right\}
\hat x\times{\rm Grad}Y_n^m(\hat x)\\
&&+\sum_{n=1}^\infty\sum_{m=-n}^nc_n^my_n(kr){\rm Grad}Y_n^m(\hat x)
+\sum_{n=1}^\infty\sum_{m=-n}^nd_n^m\frac1{r}\left\{y_n(kr)+kry'_n(kr)\right\}
\hat x\times{\rm Grad}Y_n^m(\hat x).
\enn
By the perfectly conducting boundary condition on $\pa B_{R_2}\cup\pa B_{R_1}$ we have
\be\label{RR'=1}
&&\left(\begin{array}{cc}
j_n(kR_1) & y_n(kR_1)\\
j_n(kR_2) & y_n(kR_2)
\end{array}\right)\left(\begin{array}{c}
a_n^m\\
c_n^m
\end{array}\right)=\left(\begin{array}{c}
0\\
0
\end{array}\right),\\ \label{RR'=2}
&&\left(\begin{array}{cc}
j_n(kR_1)+kR_1j'_n(kR_1) & y_n(kR_1)+kR_1y'_n(kR_1)\\
j_n(kR_2)+kR_2j'_n(kR_2) & y_n(kR_2)+kR_2y'_n(kR_2)
\end{array}\right)\left(\begin{array}{c}
b_n^m\\
d_n^m
\end{array}\right)=\left(\begin{array}{c}
0\\
0
\end{array}\right)
\en
for all $n=1,2,\cdots$, $m=-n,\cdots,n$. By (\ref{det}) we have $a_n^m=b_n^m=c_n^m=d_n^m=0$
for all $n=1,2,\cdots$, $m=-n,\cdots,n$, and so $k^2$ is not a Maxwell eigenvalue
in $B_{R_2}\se\ov{B_{R_1}}$.

On the other hand, if
\ben
\left|\begin{array}{cc}
j_n(kR_1) & y_n(kR_1)\\
j_n(kR_2) & y_n(kR_2)
\end{array}\right|=0
\enn
%\;\mbox{or}\;
or
\ben
\left|\begin{array}{cc}
j_n(kR_1)+kR_1j'_n(kR_1) & y_n(kR_1)+kR_1y'_n(kR_1)\\
j_n(kR_2)+kR_2j'_n(kR_2) & y_n(kR_2)+kR_2y'_n(kR_2)
\end{array}\right|=0
\enn
for some $n\in\N^*$, then (\ref{RR'=1}) or (\ref{RR'=2}) has non-zero solutions.
Thus there exists a nontrivial solution to the interior Maxwell problem (\ref{iMp}),
and so $k^2$ is a Maxwell eigenvalue in $B_{R_2}\se\ov{B_{R_1}}$.
The proof is thus complete.
\end{proof}

We have the following uniqueness results for the phaseless inverse electromagnetic scattering problems.

\begin{theorem}\label{ele_main}
Let $D_1,D_2$ be two bounded domains and let $R_2>R_1>0$ be large enough such that
$\ov{D_1\cup D_2}\subset B_{R_1}$ and $k^2$ is not a Maxwell eigenvalue in
$B_{R_2}\se\ov{B_{R_1}}$.

(a) Assume that $D_1$ and $D_2$ are two impenetrable obstacles with boundary
conditions $\mathscr B_1$ and $\mathscr B_2$, respectively. If the corresponding
electric total fields satisfy
\be\no
&&|{\bm e}_m(x)\cdot E_1\left(x,y_1,{\bm e}_\phi(y_1),\tau_1,y_2,{\bm e}_\theta(y_2),\tau_2\right)|\\ \label{ele_a1}
&&\qquad\qquad=|{\bm e}_m(x)\cdot E_2\left(x,y_1,{\bm e}_\phi(y_1),\tau_1,y_2,{\bm e}_\theta(y_2),\tau_2\right)|\;\;
\en
for all $x,y_1,y_2\in\pa B_{R_1}\se\{N_{R_1},S_{R_1}\}$ with $x\neq y_1,y_2$, 
$(\tau_1,\tau_2)\in\{(1,0),(0,1),(1,1)\}$, $m\in\{\phi,\theta\}$ and
\be\no
&&|{\bm e}_m(x)\cdot E_1(x,y_1,{\bm e}_n(y_1),\tau_1,y_2,{\bm e}_l(y_2),\tau_2)|\\ \label{ele_a2}
&&\qquad\qquad=|{\bm e}_m(x)\cdot E_2(x,y_1,{\bm e}_n(y_1),\tau_1,y_2,{\bm e}_l(y_2),\tau_2)|\;\;
\en
for all $x,y_1\in\pa B_{R_1}\se\{N_{R_1},S_{R_1}\}$ with $x\neq y_1$, $y_2\in\pa B_{R_2}\se\{N_{R_2},S_{R_2}\}$,
$(\tau_1,\tau_2)\in\{(0,1),(1,1)\}$, $m,n,l\in\{\phi,\theta\}$, then $D_1=D_2$
and $\mathscr B_1=\mathscr B_2$.

(b) Assume that $n_1,n_2\in C^{2,\gamma}(\R^3)$ with $\gamma>0$ are the refractive indices
of two inhomogeneous media with $n_j-1$ supported in $\ov{D_j}\;(j=1,2)$.
If the corresponding electric total fields satisfy $(\ref{ele_a1})$ and $(\ref{ele_a2})$,
then $n_1=n_2$.
\end{theorem}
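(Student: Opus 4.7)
\textbf{Proof proposal for Theorem \ref{ele_main}.} The plan is to mimic the structure of the proof of Theorem \ref{tt}, replacing the scalar total field $w$ with the tangential trace ${\bm e}_m(x)\cdot E(x,y,{\bm e}_n(y))$ of the electric total field and adapting each analytical tool (reciprocity, singularity extraction, interior eigenvalue alternative) to the Maxwell setting. First I would exploit the three choices $(\tau_1,\tau_2)\in\{(1,0),(0,1),(1,1)\}$ in (\ref{ele_a1})--(\ref{ele_a2}), together with the superposition identity (\ref{ele_lst}) and the elementary formula $|A+B|^2=|A|^2+|B|^2+2\Rt(A\ov{B})$, to convert the phaseless equalities into the phased real-part identities
\begin{equation*}
\Rt\!\left\{[{\bm e}_m(x)\cdot E_1(x,y_1){\bm e}_\phi(y_1)]\ov{[{\bm e}_m(x)\cdot E_1(x,y_2){\bm e}_\theta(y_2)]}\right\}
=\Rt\!\left\{\text{same with }E_2\right\}
\end{equation*}
on $\pa B_{R_1}\times\pa B_{R_1}$ and on $\pa B_{R_1}\times\pa B_{R_2}$, together with $|{\bm e}_m(x)\cdot E_1|=|{\bm e}_m(x)\cdot E_2|$ for single-dipole incidence.

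Next I would establish the electromagnetic analogues of Lemmas \ref{l1} and \ref{l2}: using the singular behaviour of $E^i(x,y)p$ at $x=y$ together with the analyticity of $E^s$ in $G$, the tangential components cannot vanish identically, so there exist non-empty open sets on which $|{\bm e}_m(x)\cdot E_j(x,y){\bm e}_n(y)|>0$ and a well-defined, real-analytic phase $\vartheta_j^{m,n}$ can be extracted. The analyticity-based dichotomy then forces, on each pair of spheres, either
\begin{equation*}
{\bm e}_m(x)\cdot E_1(x,y){\bm e}_n(y)=e^{i\alpha}\,{\bm e}_m(x)\cdot E_2(x,y){\bm e}_n(y)
\end{equation*}
or
\begin{equation*}
{\bm e}_m(x)\cdot E_1(x,y){\bm e}_n(y)=e^{i\beta}\,\ov{{\bm e}_m(x)\cdot E_2(x,y){\bm e}_n(y)}
\end{equation*}
for constants $\alpha,\beta\in\R$. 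Letting $x\to y$ inside the incident singularity and using that $\Phi_k$ and $\ov{\Phi_k}$ have different singularity structures (while $\Phi_k-\ov{\Phi_k}$ is entire) should pin down $\alpha=0$ and $\beta=0$, exactly as in the acoustic case. Since this identity holds for both choices $n\in\{\phi,\theta\}$ and both choices $m\in\{\phi,\theta\}$, the two tangential components together determine the full tangential trace of $E_1(x,y)p - E_2(x,y)p$ (respectively of $E_1-\ov{E_2}$) for arbitrary polarizations on $\pa B_{R_1}$ and $\pa B_{R_2}$.

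At this stage the argument splits into the two alternatives on the inner and outer spheres. The role played in the acoustic proof by the assumption that $k^2$ is not a Dirichlet eigenvalue is now played by Lemma \ref{te}: if the ``conjugate'' alternative held on both $\pa B_{R_1}$ and $\pa B_{R_2}$, then $V:=E_1(\cdot,y_0)p-\ov{E_2(\cdot,y_0)p}$ would be an analytic extension across $G$ (because $\Phi_k-\ov{\Phi_k}$ is entire and curl/curlcurl preserve this entirety), its tangential trace would vanish on $\pa B_{R_1}\cup\pa B_{R_2}$, and $V$ would satisfy the source-free Maxwell system in $B_{R_2}\se\ov{B_{R_1}}$; the non-eigenvalue assumption forces $V\equiv 0$ on $B_{R_2}\se\ov{B_{R_1}}$, hence on all of $G$ by analytic continuation, contradicting the Silver--M\"uller radiation condition. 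Consequently, on at least one of the spheres the ``$+$'' alternative holds, giving $E_1^s(x,y)p=E_2^s(x,y)p$ first on a sphere and then, by electromagnetic reciprocity \cite[Theorem 6.30]{CK} combined with the well-posedness of the exterior Maxwell problem, for all $x,y\in G$ and all polarizations $p$.

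Finally, using the mixed reciprocity relation for electromagnetic point sources and plane waves, the pointwise equality of $E_1^s$ and $E_2^s$ transfers to the equality of far-field patterns $E_1^\infty(\hat x,d)q=E_2^\infty(\hat x,d)q$ for all $\hat x,d\in\Sp^2$ and $q\in\R^3$; invoking the classical uniqueness theorems \cite[Theorem 7.1]{CK} in case (a) and the uniqueness result for inverse electromagnetic medium scattering in case (b) then yields $D_1=D_2$ with $\mathscr B_1=\mathscr B_2$ or $n_1=n_2$. I expect the main obstacle to be the careful handling of the vectorial singularity: unlike the scalar $\Phi_k$, the dipole kernel $E^i(x,y)p$ in (\ref{ele_ei}) carries polarization-dependent anisotropic singularities, so the limit-argument that kills the phase constants $\alpha,\beta$ must be done componentwise for both tangential directions $\bm e_\phi,\bm e_\theta$ at a point $x_0\notin\{N_{R_j},S_{R_j}\}$ where both components of $\widehat{x-y}\cdot\widehat{x-y}^\top$ behave transparently, and one must verify that knowledge of $|\bm e_m\cdot E|$ for $m\in\{\phi,\theta\}$ with $n\in\{\phi,\theta\}$ really suffices to reconstruct the full $E(x,y)p$ for arbitrary $p$, rather than a proper subspace of polarizations.
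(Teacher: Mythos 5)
Your overall route is the same as the paper's: convert the phaseless identities into real-part identities, extract analytic phases on small open sets, obtain the dichotomy between the ``equality'' branch and the ``conjugate'' branch, kill the unimodular factors by letting $x\to y$ into the dipole singularity, exclude the simultaneous conjugate branch on both spheres via the interior Maxwell eigenvalue assumption, and finish with reciprocity, exterior well-posedness, mixed reciprocity and the classical far-field uniqueness theorems. However, there is a concrete error in the step where you ``pin down $\alpha=0$ and $\beta=0$, exactly as in the acoustic case.'' In the electromagnetic setting the conjugate-branch constant is forced to be $e^{i\beta}=-1$, not $+1$. The reason is the factor $i/k$ in the dipole kernel $E^i(x,y)p=\frac{i}{k}(k^2I+\nabla_x\nabla_x)[p\,\Phi_k(x,y)]$: taking complex conjugates flips this factor to $-i/k$, so the combination whose singular parts cancel is $E^i+\ov{E^i}=\frac{i}{k}(k^2I+\nabla_x\nabla_x)[p(\Phi_k-\ov{\Phi_k})]$, while $E^i-\ov{E^i}$ is proportional to $\Phi_k+\ov{\Phi_k}$ and remains singular at $x=y$. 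Consequently the conjugate alternative reads $\bm e_m(x)\cdot E_1(x,y)\bm e_n(y)=-\,\bm e_m(x)\cdot\ov{E_2(x,y)}\bm e_n(y)$, and the auxiliary field for the eigenvalue argument must be $\wid{E}:=E_1(\cdot,x)\bm e_m(x)+\ov{E_2(\cdot,x)}\bm e_m(x)$. With your choice $V:=E_1p-\ov{E_2p}$ the incident part is $E^i-\ov{E^i}$, which does \emph{not} extend analytically across $G$, so $V$ is not a classical solution of the source-free Maxwell system in $B_{R_2}\se\ov{B_{R_1}}$ and the non-eigenvalue argument collapses as written. The fix is purely a consistent sign change, but as stated the step fails.

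A secondary point you pass over: for the cross-sphere data ($x\in\pa B_{R_1}$, $y_2\in\pa B_{R_2}$) there is no singularity available, so one cannot rule out that $\bm e_m(x)\cdot E_1(x,y_2)\bm e_l(y_2)$ vanishes identically on $\pa B_{R_1}\times\pa B_{R_2}$; in that degenerate case no phase can be extracted and the dichotomy must be obtained by a separate (easier) argument, which the paper treats as an explicit case distinction. You should also record, as the paper does, that passing from the two tangential components for the two polarizations to the full matrix identity $E_1(x,y)=E_2(x,y)$ on $G\times G$ requires two applications of exterior well-posedness interleaved with the reciprocity relation $E^s_j(x,y)=[E^s_j(y,x)]^\top$, since a single application only controls the trace on one sphere in one variable.
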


\begin{remark}\label{ele_remark} {\rm
Since $E^i_j(x,y)=[E^i_j(y,x)]^\top$, and by the reciprocity relation $E^s_j(x,y)=[E^s_j(y,x)]^\top$
for all $x,y\in G$ (see \cite[Theorem 6.32]{CK}), $j=1,2$, we know that (\ref{ele_a1})
with $m=\phi$ and $(\tau_1,\tau_2)=(0,1)$ is equivalent to (\ref{ele_a1}) with $m=\theta$
and $(\tau_1,\tau_2)=(1,0)$.
}
\end{remark}

To prove Theorem \ref{ele_main}, we need some results on the phaseless electric total-fields measured
on $S_{R_1}$.

\begin{lemma}\label{ele_lem1}
Assume that the assumptions of Theorem $\ref{ele_main}$ are satisfied.
If for any fixed $m\in\{\phi,\theta\}$ there holds 
\be\no
&&|{\bm e}_m(x)\cdot E_1(x,y_1,{\bm e}_\phi(y_1),\tau_1,y_2,{\bm e}_\theta(y_2),\tau_2)|\\ \label{ele_01}
&&\qquad\qquad=|{\bm e}_m(x)\cdot E_2(x,y_1,{\bm e}_\phi(y_1),\tau_1,y_2,{\bm e}_\theta(y_2),\tau_2)|
\en
for all $x,y_1,y_2\in\pa B_{R_1}\se\{N_{R_1},S_{R_1}\}$ with $x\neq y_1,y_2$, 
$(\tau_1,\tau_2)\in\{(1,0),(0,1),(1,1)\}$, then we have either
\be\no
{\bm e}_m(x)\cdot E_1(x,y_1){\bm e}_\phi(y_1)&=&{\bm e}_m(x)\cdot E_2(x,y_1){\bm e}_\phi(y_1),\\
&&\forall x,y_1\in \pa B_{R_1}\se\{N_{R_1},S_{R_1}\},x\neq y_1,\;\;\label{ele_019}
\en
\be\no
{\bm e}_m(x)\cdot E_1(x,y_2){\bm e}_\theta(y_2)&=&{\bm e}_m(x)\cdot E_2(x,y_2){\bm e}_\theta(y_2),\\
&&\forall x,y_2\in \pa B_{R_1}\se\{N_{R_1},S_{R_1}\},x\neq y_2\;\;\label{ele_020}
\en
or
\be\no
{\bm e}_m(x)\cdot E_1(x,y_1){\bm e}_\phi(y_1)&=&-{\bm e}_m(x)\cdot\ov{E_2(x,y_1)}{\bm e}_\phi(y_1),\\
&&\forall x,y_1\in \pa B_{R_1}\se\{N_{R_1},S_{R_1}\},x\neq y_1,\;\;\label{ele_027}
\en
\be\no
{\bm e}_m(x)\cdot E_1(x,y_2){\bm e}_\theta(y_2)&=&-{\bm e}_m(x)\cdot\ov{E_2(x,y_2)}{\bm e}_\theta(y_2),\\
&&\forall x,y_2\in \pa B_{R_1}\se\{N_{R_1},S_{R_1}\},x\neq y_2.\;\;\label{ele_028}
\en
\end{lemma}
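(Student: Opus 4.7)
The plan is to follow Case 1 of the proof of Theorem \ref{tt}, adapted to the scalar quantities
\[
u_j(x,y):=\bm e_m(x)\cdot E_j(x,y)\bm e_\phi(y),\qquad v_j(x,y):=\bm e_m(x)\cdot E_j(x,y)\bm e_\theta(y),\qquad j=1,2.
\]
The three instances of (\ref{ele_01}) corresponding to $(\tau_1,\tau_2)\in\{(1,0),(0,1),(1,1)\}$ translate respectively into $|u_1|=|u_2|$, $|v_1|=|v_2|$ and $|u_1+v_1|=|u_2+v_2|$. Squaring the third equation and subtracting the first two then gives the electromagnetic analogue of (\ref{real}),
\[
\Rt\{u_1(x,y_1)\ov{v_1(x,y_2)}\}=\Rt\{u_2(x,y_1)\ov{v_2(x,y_2)}\},
\]
for all admissible triples $(x,y_1,y_2)$ on $\pa B_{R_1}\se\{N_{R_1},S_{R_1}\}$.

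Next I would establish analogues of Lemmas \ref{l1}--\ref{l2} for $u_j,v_j$: the explicit $|x-y|^{-3}$ singularity of $E^i(x,y)$ at $x=y$ in (\ref{ele_ei}), combined with analyticity of $E_j^s$ on $\R^3\se\ov{D_j}$, produces two disjoint open sets $U_1,U_2\subset\pa B_{R_1}\se\{N_{R_1},S_{R_1}\}$ on which $u_j(x,y_1)$ and $v_j(x,y_2)$ are simultaneously non-vanishing for $(x,y_1,y_2)\in U_1\times U_2\times U_2$. On this set write $u_j=re^{iA_j}$ and $v_j=\rho e^{iB_j}$ with $r=r(x,y_1)$, $\rho=\rho(x,y_2)$ independent of $j$ and $A_j,B_j$ real-analytic. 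The identity above reduces to $\cos(A_1-B_1)=\cos(A_2-B_2)$, and by an identity-principle/Baire argument on the connected open set $U_1\times U_2\times U_2$, globally either (Case 1) $A_1-A_2\equiv B_1-B_2\pmod{2\pi}$, or (Case 2) $A_1+A_2\equiv B_1+B_2\pmod{2\pi}$. In Case 1, separation of variables ($y_1$ on the left, $y_2$ on the right) forces both sides to equal a single real-analytic function $\alpha(x)$, giving $u_1=e^{i\alpha(x)}u_2$ and $v_1=e^{i\alpha(x)}v_2$ on the open set; in Case 2 the same separation yields $u_1=e^{i\gamma(x)}\ov{u_2}$ and $v_1=e^{i\gamma(x)}\ov{v_2}$ for some real-analytic $\gamma(x)$.

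Then I would extend these identities by real-analyticity in $y$ from $U_2$ to all of $\pa B_{R_1}\se\{x,N_{R_1},S_{R_1}\}$, and kill the $x$-dependence of the phase factor through reciprocity as in Theorem \ref{tt}. By symmetry of $E^i$ together with Remark \ref{ele_remark},
\[
\bm e_m(x)\cdot E_j(x,y)\bm e_n(y)=\bm e_n(y)\cdot E_j(y,x)\bm e_m(x);
\]
hence for $m=\phi$ one has the same-polarization reciprocity $u_j(x,y)=u_j(y,x)$, while for $m=\theta$ one has $v_j(x,y)=v_j(y,x)$. Swapping $x$ with the appropriate point source in the Case 1 (resp.\ Case 2) identity forces $e^{i\alpha(x)}$ (resp.\ $e^{i\gamma(x)}$) to be a constant $e^{i\alpha}$ (resp.\ $e^{i\gamma}$); the remaining relation (for the other of $u_j,v_j$) inherits this constant automatically since both carried the same phase. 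Finally, singularity comparison pins down the constant: by (\ref{ele_ei}) the leading singularity of $E^i(x,y)$ at $x=y$ is $(i/(4\pi k|x-y|^3))[3\widehat{x-y}\widehat{x-y}^\top-I]$, which is purely imaginary, so the same holds for $u^i(x,y):=\bm e_m(x)\cdot E^i(x,y)\bm e_\phi(y)$. In Case 1 the identity $u_1^s-e^{i\alpha}u_2^s=(e^{i\alpha}-1)u^i$ has an analytic left side but an imaginary $|x-y|^{-3}$-singular right side, forcing $e^{i\alpha}=1$; in Case 2 the identity $u_1^s-e^{i\gamma}\ov{u_2^s}=e^{i\gamma}\ov{u^i}-u^i$ has leading right-hand singularity proportional to $(e^{i\gamma}+1)$, forcing $e^{i\gamma}=-1$. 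The two dichotomous conclusions are precisely (\ref{ele_019})--(\ref{ele_020}) and (\ref{ele_027})--(\ref{ele_028}).

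The main obstacle will be verifying that the leading $|x-y|^{-3}$ coefficient
\[
3(\widehat{x-y}\cdot\bm e_m(x))(\widehat{x-y}\cdot\bm e_\phi(y))-\bm e_m(x)\cdot\bm e_\phi(y),
\]
together with its $\bm e_\theta$-analogue for $v^i$, does not vanish identically as $y\to x$ along $\pa B_{R_1}$: in this tangential limit $\widehat{x-y}$ lies in ${\rm span}\{\bm e_\phi(y),\bm e_\theta(y)\}$, so one has to check by a direct limit computation in the spherical frame that the coefficient is non-zero for at least one tangent direction, which is needed to trigger the singularity dichotomy $e^{i\alpha}=1$ vs.\ $e^{i\gamma}=-1$. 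A secondary care point is the promotion of the dichotomy (Case 1 vs.\ Case 2) into a single global case on $U_1\times U_2\times U_2$, which requires writing $\cos(A_1-B_1)=\cos(A_2-B_2)$ as a countable union of analytic identities indexed by the integer shifts and invoking the identity principle together with Baire category.
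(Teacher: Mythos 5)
Your proposal is correct and follows essentially the same route as the paper: the real-part/cosine dichotomy on small open subsets of $\pa B_{R_1}$, separation of the $y_1$- and $y_2$-dependence to isolate a phase factor $e^{i\alpha(x)}$ or $e^{i\gamma(x)}$, and the purely imaginary $|x-y|^{-3}$ leading singularity of the dipole kernel to force $e^{i\alpha}=1$ or $e^{i\gamma}=-1$; the only minor deviation is that you make the phase constant via the same-polarization reciprocity $u_j(x,y)=u_j(y,x)$, whereas the paper skips reciprocity in this lemma and instead evaluates the singular identity directly at each $y_1\in U$. The tangential-limit verification you flag as the main obstacle is precisely what the paper carries out and it does go through: letting $x\to y_1$ along the circle $C_{\bm e_\phi(y_1)}=\{x\in\pa B_{R_1}:(x-y_1)\cdot\bm e_\phi(y_1)=0\}$ makes the leading coefficient tend to $3(\widehat{x-y_1}\cdot\bm e_\phi)^2-1\to-1\neq0$ for the $\phi\phi$ component, and letting $x\to y_2$ along $C_{\bm e_\phi(y_2)+\bm e_\theta(y_2)}$ (tangent $\pm(\bm e_\phi-\bm e_\theta)/\sqrt2$) gives $3(t\cdot\bm e_\phi)(t\cdot\bm e_\theta)=-3/2\neq0$ for the $\phi\theta$ component, so the singularity dichotomy is indeed triggered.
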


\begin{proof}
We only consider the case $m=\phi$ since the case $m=\theta$ can be proved similarly.

Using (\ref{ele_lst}) and (\ref{ele_01}) and arguing similarly as in the proof of Theorem \ref{tt} give
\be\no
&&{\rm Re}\{[{\bm e}_\phi(x)\cdot E_1(x,y_1){\bm e}_\phi(y_1)]\times
[\ov{{\bm e}_\phi(x)\cdot E_1(x,y_2){\bm e}_\theta(y_2)}]\}\\ \label{ele_011}
&&\qquad={\rm Re}\{[{\bm e}_\phi(x)\cdot E_2(x,y_1){\bm e}_\phi(y_1)]\times
[\ov{{\bm e}_\phi(x)\cdot E_2(x,y_2){\bm e}_\theta(y_2)}]\}
\en
for all $x,y_1,y_2\in\pa B_{R_1}\se\{N_{R_1},S_{R_1}\}$, $x\neq y_1,y_2$.
For $x,y\in\pa B_{R_1}\se\{N_{R_1},S_{R_1}\}$, $x\neq y$, define
\ben
r_j^{(\phi\phi)}(x,y):=|{\bm e}_\phi(x)\cdot E_j(x,y){\bm e}_\phi(y)|,\;\;\;
r_j^{(\phi\theta)}(x,y):=|{\bm e}_\phi(x)\cdot E_j(x,y){\bm e}_\theta(y)|,\;\;\;j=1,2.
\enn
It then follows from (\ref{ele_01}) with $(\tau_1,\tau_2)=(1,0)$ and $(\tau_1,\tau_2)=(0,1)$ that
\ben
r_1^{(\phi\phi)}(x,y)=r_2^{(\phi\phi)}(x,y)=:r^{(\phi\phi)}(x,y),\;\;\;
r_1^{(\phi\theta)}(x,y)=r_2^{(\phi\theta)}(x,y)=:r^{(\phi\theta)}(x,y)
\enn
for all $x,y\in\pa B_{R_1}\se\{N_{R_1},S_{R_1}\},\;x\neq y$.
Therefore we can write
\ben
&&{\bm e}_\phi(x)\cdot E_j(x,y_1){\bm e}_\phi(y_1):=r^{(\phi\phi)}(x,y_1)e^{i\vartheta_j^{(\phi\phi)}(x,y_1)},
\;\;\forall x,y_1\in \pa B_{R_1}\se\{N_{R_1},S_{R_1}\},x\neq y_1,\\
&&{\bm e}_\phi(x)\cdot E_j(x,y_1){\bm e}_\theta(y_2):=r^{(\phi\theta)}(x,y_2)e^{i\vartheta_j^{(\phi\theta)}(x,y_2)},
\;\;\forall x,y_2\in \pa B_{R_1}\se\{N_{R_1},S_{R_1}\},x\neq y_2,
\enn
where $\vartheta_j^{(\phi\phi)}$ and $\vartheta_j^{(\phi\theta)}$, $j=1,2$ are real-valued functions.

We now prove that $r^{(\phi\phi)}(x,y_1)\not\equiv0$, $x,y_1\in\pa B_{R_1}\se\{N_{R_1},S_{R_1}\}$, $x\neq y_1$,
and $r^{(\phi\theta)}(x,y_2)\not\equiv0$, $x,y_2\in\pa B_{R_1}\se\{N_{R_1},S_{R_1}\}$, $x\neq y_2$.
In fact, fix $y_1\in\pa B_{R_1}\se\{N_{R_1},S_{R_1}\}$ and define the circle
$C_{\bm e_\phi(y_1)}:=\{x\in\pa B_{R_1}:(x-y_1)\cdot\bm e_\phi(y_1)=0\}$,
%\ben
%C_{\bm e_\phi(y_1)}:=\{x\in\pa B_{R_1}:(x-y_1)\cdot\bm e_\phi(y_1)=0\},
%\enn
which is the intersection of the sphere $\pa B_{R_1}$ with the plane whose normal vector
is $\bm e_\phi(y_1)$ at $y_1$. When $x$ tend to $y_1$ along the circle $C_{\bm e_\phi(y_1)}$,
we have $\widehat{x-y_1}^\top\bm e_\phi(y_1)\to0$ and ${\bm e}_\phi(x)\cdot{\bm e}_\phi(y_1)\to1$.
Thus, by (\ref{ele_ei}) it is known that
\be\label{ele_sing}
{\bm e}_\phi(x)\cdot E^i(x,y_1){\bm e}_\phi(y_1)
\sim\frac ik\left[k^2+\left(ik-\frac1{|x-y_1|}\right)\frac1{|x-y_1|}\right]\Phi_k(x,y_1)
\en
as $x$ goes to $y_1$ along the circle $C_{\bm e_\phi(y_1)}$.
The singularity in (\ref{ele_sing}) implies that $r^{(\phi\phi)}(x,y_1)\not\equiv0$ for
$x,y_1\in\pa B_{R_1}\se\{N_{R_1},S_{R_1}\}$ with $x\neq y_1$ since $E_j^s(x,y_1)$ is analytic
with respect to $x,y_1\in\pa B_{R_1}\se\{N_{R_1},S_{R_1}\}$ with $x\neq y_1$, respectively ($j=1,2$).
Further, fix $y_2\in\pa B_{R_1}\se\{N_{R_1},S_{R_1}\}$ and define the circle
%$C_{\bm e_\phi(y_2)+\bm e_\theta(y_2)}$ by
\ben
C_{\bm e_\phi(y_2)+\bm e_\theta(y_2)}:=\{x\in\pa B_{R_1}:(x-y_2)\cdot(\bm e_\phi(y_2)
+\bm e_\theta(y_2))=0\}.
\enn
Then, on letting $x$ tend to $y_2$ along $C_{\bm e_\phi(y_2)+\bm e_\theta(y_2)}$
we have $\bm e_\phi(x)\cdot\bm e_\theta(y_2)\to0$ and
${\bm e}_\phi(x)\cdot\left[\widehat{x-y_2}\cdot\widehat{x-y_2}^\top{\bm e}_\theta(y_2)\right]\to c_1$
for a non-zero constant $c_1$. Thus it follows from (\ref{ele_ei}) that
\be\label{ele_sing'}
{\bm e}_\phi(x)\cdot E^i(x,y_2){\bm e}_\theta(y_2)=\frac{1}{|x-y_2|^2}\Phi_k(x,y_2)\left[c_2+o(1)\right]
\en
as $x\to y_2$ along $C_{\bm e_\phi(y_2)+\bm e_\theta(y_2)}$, where $c_2$ is a non-zero constant.
Therefore the singularity in (\ref{ele_sing'}) implies that $r^{(\phi\theta)}(x,y_2)\not\equiv0$
for $x,y_2\in\pa B_{R_1}\se\{N_{R_1},S_{R_1}\}$ with $x\neq y_2$ since $E_j^s(x,y_2)$ is analytic
with respect to $x,y_2\in\pa B_{R_1}\se\{N_{R_1},S_{R_1}\}$, $x\neq y_2$, respectively ($j=1,2$).
Then, similarly as in the proof of Theorem \ref{tt}, we can show that there are three small enough open sets
$U,U_1,U_2\subset\pa B_{R_1}\se\{N_{R_1},S_{R_1}\}$ such that $U,U_1$ and $U_2$ are disjoint,
$r^{(\phi\phi)}(x,y_1)\neq0$ and $r^{(\phi\theta)}(x,y_2)\neq0$ for all $x\in U$, $y_1\in U_1$
and $y_2\in U_2$, and $\vartheta_j^{(\phi\phi)}(x,y_1)$ and $\vartheta_j^{(\phi\theta)}(x,y_2)$
are analytic with respect to $x\in U$, $y_1\in U_1$, $y_2\in U_2$, respectively, $j=1,2$.

Now, by (\ref{ele_011}) we have
\be\label{ele_012}
\cos[\vartheta_1^{(\phi\phi)}(x,y_1)-\vartheta_1^{(\phi\theta)}(x,y_2)]
=\cos[\vartheta_2^{(\phi\phi)}(x,y_1)-\vartheta_2^{(\phi\theta)}(x,y_2)]
\en
for all $(x,y_1,y_2)\in U\times U_1\times U_2$. Since $\vartheta_j^{(\phi\phi)}(x,y_1)$
and $\vartheta_j^{(\phi\theta)}(x,y_2)$ are analytic functions of $x\in U$, $y_1\in U_1$
and $y_2\in U_2$, respectively ($j=1,2$), we obtain that there holds either
\be\label{ele_013}
\vartheta_1^{(\phi\phi)}(x,y_1)-\vartheta_1^{(\phi\theta)}(x,y_2)
=\vartheta_2^{(\phi\phi)}(x,y_1)-\vartheta_2^{(\phi\theta)}(x,y_2)+2p\pi
\en
or
\be\label{ele_014}
\vartheta_1^{(\phi\phi)}(x,y_1)-\vartheta_1^{(\phi\theta)}(x,y_2)
=-[\vartheta_2^{(\phi\phi)}(x,y_1)-\vartheta_2^{(\phi\theta)}(x,y_2)]+2p\pi
\en
for all $(x,y_1,y_2)\in U\times U_1\times U_2$, where $p\in\Z$.

For the case when (\ref{ele_013}) holds, we have
\ben
\alpha(x):=\vartheta_1^{(\phi\phi)}(x,y_1)-\vartheta_2^{(\phi\phi)}(x,y_1)
=\vartheta_1^{(\phi\theta)}(x,y_2)-\vartheta_2^{(\phi\theta)}(x,y_2)+2p\pi
\enn
depends only on $x$, which is a real-valued analytic function in $x\in U$. Thus
\ben
{\bm e}_\phi(x)\cdot E_1(x,y_1){\bm e}_\phi(y_1)&=&r^{(\phi\phi)}(x,y_1)e^{i\vartheta_1^{(\phi\phi)}(x,y_1)}\\
&=&r^{(\phi\phi)}(x,y_1)e^{i\alpha(x)+i\vartheta_2^{(\phi\phi)}(x,y_1)}\\
&=&e^{i\alpha(x)}{\bm e}_\phi(x)\cdot E_2(x,y_1){\bm e}_\phi(y_1),\\
{\bm e}_\phi(x)\cdot E_1(x,y_2){\bm e}_\theta(y_2)
&=&r^{(\phi\theta)}(x,y_2)e^{i\vartheta_1^{(\phi\theta)}(x,y_2)}\\
&=&r^{(\phi\theta)}(x,y_2)e^{i\alpha(x)+i\vartheta_2^{(\phi\theta)}(x,y_2)}\\
&=&e^{i\alpha(x)}{\bm e}_\phi(x)\cdot E_2(x,y_2){\bm e}_\theta(y_2)
\enn
for all $(x,y_1,y_2)\in U\times U_1\times U_2$. By the analyticity of
$E_1(x,y)-e^{i\alpha(x)}E_2(x,y)$ in $y\in \pa B_{R_1}$ for $y\neq x$, we obtain
\be\no
&&{\bm e}_\phi(x)\cdot E_1(x,y_1){\bm e}_\phi(y_1)\\ \label{ele_015}
&&\qquad\;=e^{i\alpha(x)}{\bm e}_\phi\cdot E_2(x,y_1){\bm e}_\phi(y_1),\;\;
\forall x\in U,y_1\in \pa B_{R_1}\se\{N_{R_1},S_{R_1}\},x\neq y_1,\qquad\;\;\\ \no
&&{\bm e}_\phi(x)\cdot E_1(x,y_2){\bm e}_\theta(y_2)\\ \label{ele_016}
&&\qquad\;=e^{i\alpha(x)}{\bm e}_\phi\cdot E_2(x,y_2){\bm e}_\theta(y_2),\;\;
\forall x\in U,y_2\in \pa B_{R_1}\se\{N_{R_1},S_{R_1}\},x\neq y_2.\qquad\;\;
\en
From (\ref{ele_015}) it follows that
\be\no
{\bm e}_\phi(x)\cdot[E^s_1(x,y_1){\bm e}_\phi(y_1)-e^{i\alpha(x)}E^s_2(x,y_1){\bm e}_\phi(y_1)]
=[e^{i\alpha(x)}-1]{\bm e}_\phi(x)\cdot E^i(x,y_1){\bm e}_\phi(y_1)\\ \label{ele_1sing}
\en
for all $x\in U$ and $y_1\in\pa B_{R_1}\se\{N_{R_1},S_{R_1}\}$ with $x\neq y_1$.
For arbitrarily fixed $y_1\in U$, the left-hand side of (\ref{ele_1sing}) is analytic in $x\in U$,
while, by (\ref{ele_sing}) the right-hand side of (\ref{ele_1sing}) is singular when $x$ is close
to $y_1$ along the circle $C_{\bm e_\phi(y_1)}$. Therefore, $e^{i\alpha(y_1)}=1$. Since $y_1\in U$ is
arbitrary, we have $e^{i\alpha(x)}=1$ for all $x\in U$, and so (\ref{ele_015}) and (\ref{ele_016}) become
\be\no
&&{\bm e}_\phi(x)\cdot E_1(x,y_1){\bm e}_\phi(y_1)\\ \label{ele_017}
&&\qquad\qquad={\bm e}_\phi\cdot E_2(x,y_1){\bm e}_\phi(y_1),\;\;
\forall x\in U,y_1\in \pa B_{R_1}\se\{N_{R_1},S_{R_1}\},x\neq y_1,\qquad\;\;\\ \no
&&{\bm e}_\phi(x)\cdot E_1(x,y_2){\bm e}_\theta(y_2)\\ \label{ele_018}
&&\qquad\qquad={\bm e}_\phi\cdot E_2(x,y_2){\bm e}_\theta(y_2),\;\;
\forall x\in U,y_2\in \pa B_{R_1}\se\{N_{R_1},S_{R_1}\},x\neq y_2.\qquad\;\;
\en
This, together with the analyticity of $E_j(x,y)\;(j=1,2)$ in $x\in \pa B_{R_1}$ with $x\neq y$,
gives (\ref{ele_019}) and (\ref{ele_020}).

Similarly, for the case when (\ref{ele_014}) holds, we can deduce
\be\no
&&{\bm e}_\phi(x)\cdot E_1(x,y_1){\bm e}_\phi(y_1)\\ \label{ele_022}
&&\qquad\;=e^{i\beta(x)}{\bm e}_\phi(x)\cdot\ov{E_2(x,y_1)}{\bm e}_\phi(y_1),\;\;
\forall x\in U,y_1\in \pa B_{R_1}\se\{N_{R_1},S_{R_1}\},x\neq y_1,\qquad\;\;\\ \no
&&{\bm e}_\phi(x)\cdot E_1(x,y_2){\bm e}_\theta(y_2)\\ \label{ele_023}
&&\qquad\;=e^{i\beta(x)}{\bm e}_\phi(x)\cdot\ov{E_2(x,y_2)}{\bm e}_\theta(y_2),\;\;
\forall x\in U,y_2\in \pa B_{R_1}\se\{N_{R_1},S_{R_1}\},x\neq y_2,\qquad\;\;
\en
where $\beta$ is a real-valued analytic function of $x\in U$.
From (\ref{ele_022}) it is easy to derive that
\be\no
&&{\bm e}_\phi(x)\cdot[E_1^s(x,y_1)-e^{i\beta(x)}\ov{E_2^s(x,y_1)}]{\bm e}_\phi(y_1)\\ \label{ele_2sing}
&&\qquad\qquad={\bm e}_\phi(x)\cdot[e^{i\beta(x)}\ov{E^i(x,y_1)}-E^i(x,y_1)]{\bm e}_\phi(y_1)\;\;
\en
for all $x\in U$, $y_1\in \pa B_{R_1}$, $x\neq y_1$.
For arbitrarily fixed $y_1\in U$, the left-hand side of (\ref{ele_2sing}) is analytic in $x\in U$,
but, by (\ref{ele_ei}) and a direct calculation, the right-hand side of (\ref{ele_2sing}) has a singularity
at $x=y_1$ unless $e^{i\beta(x)}=-1$ for $x\in C_{\bm e_\phi(y_1)}$ near $y_1$.
This means that $e^{i\beta(y_1)}=-1$. By the arbitrariness of $y_1\in U$, we have $e^{i\beta(x)}=-1$
for all $x\in U$, and so
\be\no
e^{i\beta(x)}\ov{E^i(x,y)}-E^i(x,y)&=&-\ov{E^i(x,y)}-E^i(x,y)\\ \label{ele_sinc}
&=&(k^2I+\nabla_x\nabla_x)\frac{i}{k}\left[\ov{\Phi_k(x,y)}-\Phi_k(x,y)\right]\;\;
\en
is analytic in $x\in\R^3$ and $y\in\R^3$, respectively, since $\ov{\Phi_k(x,y)}-\Phi_k(x,y)$
is analytic in $x\in\R^3$ and $y\in\R^3$, respectively.
Thus (\ref{ele_022}) and (\ref{ele_023}) are reduced to
\be\no
&&{\bm e}_\phi(x)\cdot E_1(x,y_1){\bm e}_\phi(y_1)\\ \label{ele_025}
&&\qquad\;\;=-{\bm e}_\phi(x)\cdot\ov{E_2(x,y_1)}{\bm e}_\phi(y_1),\;\;
\forall x\in U,y_1\in \pa B_{R_1}\se\{N_{R_1},S_{R_1}\},x\neq y_1,\qquad\;\;\\ \no
&&{\bm e}_\phi(x)\cdot E_1(x,y_2){\bm e}_\theta(y_2)\\ \label{ele_026}
&&\qquad\;\;=-{\bm e}_\phi(x)\cdot\ov{E_2(x,y_2)}{\bm e}_\theta(y_2),\;\;
\forall x\in U,y_2\in \pa B_{R_1}\se\{N_{R_1},S_{R_1}\},x\neq y_2.\qquad\;\;
\en
Both (\ref{ele_027}) and (\ref{ele_028}) then follow from the analyticity of $E_j(x,y)\;(j=1,2)$
in $x\in \pa B_{R_1}$ for $x\neq y$. The proof is thus complete.
\end{proof}

\begin{lemma}\label{ele_lem}
Assume that the assumptions of Theorem $\ref{ele_main}$ are satisfied.
If for any fixed $m,n,l\in\{\phi,\theta\}$ there holds
\be\no
&&|{\bm e}_m(x)\cdot E_1(x,y_1,{\bm e}_n(y_1),\tau_1,y_2,{\bm e}_l(y_2),\tau_2)|\\ \label{ele_3}
&&\qquad\qquad=|{\bm e}_m(x)\cdot E_2(x,y_1,{\bm e}_n(y_1),\tau_1,y_2,{\bm e}_l(y_2),\tau_2)|\;\;
\en
for all $x,y_1\in\pa B_{R_1}\se\{N_{R_1},S_{R_1}\}$ with $x\neq y_1$, 
$y_2\in\pa B_{R_2}\se\{N_{R_2},S_{R_2}\}$, $(\tau_1,\tau_2)\in\{(1,0),(0,1),(1,1)\}$, 
then we have either
\be\no
{\bm e}_m(x)\cdot E_1(x,y_1){\bm e}_n(y_1)&=&{\bm e}_m(x)\cdot E_2(x,y_1){\bm e}_n(y_1),\\ \label{ele_4}
&&\;\forall x,y_1\in\pa B_{R_1}\se\{N_{R_1},S_{R_1}\}\;\textrm{with}\;x\neq y_1,\qquad\\ \no
{\bm e}_m(x)\cdot E_1(x,y_2){\bm e}_l(y_2)&=&{\bm e}_m(x)\cdot E_2(x,y_2){\bm e}_l(y_2),\\ \label{ele_5}
&&\;\forall x\in \pa B_{R_1}\se\{N_{R_1},S_{R_1}\},y_2\in \pa B_{R_2}\se\{N_{R_2},S_{R_2}\}\qquad
\en
or
\be\no
{\bm e}_m(x)\cdot E_1(x,y_1){\bm e}_n(y_1)&=&-{\bm e}_m(x)\cdot\ov{E_2(x,y_1)}{\bm e}_n(y_1),\\ \label{ele_6}
&&\;\forall x,y_1\in\pa B_{R_1}\se\{N_{R_1},S_{R_1}\}\;\textrm{with}\;x\neq y_1,\qquad\\ \no
{\bm e}_m(x)\cdot E_1(x,y_2){\bm e}_l(y_2)&=&-{\bm e}_m(x)\cdot\ov{E_2(x,y_2)}{\bm e}_l(y_2),\\ \label{ele_7}
&&\;\forall x\in\pa B_{R_1}\se\{N_{R_1},S_{R_1}\},y_2\in\pa B_{R_2}\se\{N_{R_2},S_{R_2}\}.\qquad
\en
\end{lemma}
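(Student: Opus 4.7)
The plan is to mirror the strategy of the proof of Lemma \ref{ele_lem1}, the main structural difference being that the second source now lies on the outer sphere $\pa B_{R_2}$, so any singularity-based argument that pins down a phase factor must be carried out at the first source $y_1\in\pa B_{R_1}$. First I would substitute the linear superposition (\ref{ele_lst}) into the hypothesis (\ref{ele_3}) for $(\tau_1,\tau_2)\in\{(1,0),(0,1),(1,1)\}$, expand the square of the modulus for $(1,1)$ and subtract the single-source cases to obtain the cross-term identity
\begin{align*}
&\mathrm{Re}\bigl\{[{\bm e}_m(x)\cdot E_1(x,y_1){\bm e}_n(y_1)]\,\overline{[{\bm e}_m(x)\cdot E_1(x,y_2){\bm e}_l(y_2)]}\bigr\}\\
&\qquad=\mathrm{Re}\bigl\{[{\bm e}_m(x)\cdot E_2(x,y_1){\bm e}_n(y_1)]\,\overline{[{\bm e}_m(x)\cdot E_2(x,y_2){\bm e}_l(y_2)]}\bigr\}.
\end{align*}
The single-source cases also give that the moduli agree, so one may write ${\bm e}_m(x)\cdot E_j(x,y_1){\bm e}_n(y_1)=r^{(mn)}(x,y_1)e^{i\vartheta_j^{(mn)}(x,y_1)}$ and ${\bm e}_m(x)\cdot E_j(x,y_2){\bm e}_l(y_2)=r^{(ml)}(x,y_2)e^{i\vartheta_j^{(ml)}(x,y_2)}$ with real-valued phases.

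Next I would verify that neither amplitude is identically zero. For $r^{(mn)}$ this follows exactly as in Lemma \ref{ele_lem1}: letting $x\to y_1$ along a circle adapted to the pair $(m,n)$, the incident contribution $\bm e_m(x)\cdot E^i(x,y_1)\bm e_n(y_1)$ is singular (cf.~(\ref{ele_sing})--(\ref{ele_sing'})) while $E_j^s$ is analytic there. For $r^{(ml)}$ on $\pa B_{R_1}\times\pa B_{R_2}$ the observation and source never meet, so I would mimic Lemma \ref{l1}: if $\bm e_m(\cdot)\cdot E_j(\cdot,y_2)\bm e_l(y_2)$ vanished identically on $\pa B_{R_1}$ for some $y_2\in\pa B_{R_2}$, then uniqueness of the exterior problem combined with analyticity of $E_j^s(\cdot,y_2)\bm e_l(y_2)$ in $G$ would propagate this identity to $\R^3\se(\ov D\cup\{y_2\})$, forcing the scattered field to carry the dipole singularity at $y_2$, a contradiction.

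With both amplitudes non-zero on suitable open sets $U,U_1\subset\pa B_{R_1}\se\{N_{R_1},S_{R_1}\}$ and $U_2\subset\pa B_{R_2}\se\{N_{R_2},S_{R_2}\}$ on which the phases are real-analytic, the cross-term identity becomes
\[
\cos\bigl[\vartheta_1^{(mn)}(x,y_1)-\vartheta_1^{(ml)}(x,y_2)\bigr]=\cos\bigl[\vartheta_2^{(mn)}(x,y_1)-\vartheta_2^{(ml)}(x,y_2)\bigr],
\]
and analyticity in $(x,y_1,y_2)\in U\times U_1\times U_2$ yields two alternatives analogous to (\ref{ele_013})--(\ref{ele_014}). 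In the first alternative, $\vartheta_1^{(mn)}-\vartheta_2^{(mn)}$ equals $\vartheta_1^{(ml)}-\vartheta_2^{(ml)}$ modulo $2\pi$ and depends only on $x\in U$, yielding a real-analytic function $\alpha(x)$ such that ${\bm e}_m(x)\cdot E_1(x,y_1){\bm e}_n(y_1)=e^{i\alpha(x)}{\bm e}_m(x)\cdot E_2(x,y_1){\bm e}_n(y_1)$ and the analogous identity for $y_2$. Analyticity in the source variable extends these to all of $\pa B_{R_1}\se\{N_{R_1},S_{R_1}\}$ (with $x\ne y_1$) and $\pa B_{R_2}\se\{N_{R_2},S_{R_2}\}$, respectively. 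Rewriting the first identity as
\[
{\bm e}_m(x)\cdot[E_1^s(x,y_1)-e^{i\alpha(x)}E_2^s(x,y_1)]{\bm e}_n(y_1)=[e^{i\alpha(x)}-1]\,{\bm e}_m(x)\cdot E^i(x,y_1){\bm e}_n(y_1),
\]
the left-hand side is analytic in $x$ while the right-hand side is singular as $x\to y_1$ along the circle adapted to $(m,n)$ unless $e^{i\alpha(y_1)}=1$; arbitrariness of $y_1\in U$ then gives $e^{i\alpha(x)}\equiv 1$, hence (\ref{ele_4})--(\ref{ele_5}). The second alternative is treated by the same device and, via the identity (\ref{ele_sinc}) for $e^{i\beta(x)}\overline{E^i}-E^i$, forces $e^{i\beta(x)}\equiv -1$, thus yielding (\ref{ele_6})--(\ref{ele_7}).

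The main obstacle is the non-vanishing of $r^{(ml)}(x,y_2)$ on $\pa B_{R_1}\times\pa B_{R_2}$: since $x$ and $y_2$ lie on disjoint spheres, no direct singularity of the incident field is available at the measurement, and the Rellich/analyticity argument of Lemma \ref{l1} has to be adapted to the tangential component of an electric dipole field, with care taken so that the contradiction is derived at the source point $y_2$ rather than at $x$.
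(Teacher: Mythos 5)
Your overall strategy (cross-term identity, polar decomposition, cosine identity, two alternatives, pinning the phase factor by the singularity at $y_1\in\pa B_{R_1}$) matches the paper's, but there is a genuine gap exactly at the point you flag as ``the main obstacle'': the non-vanishing of $r^{(ml)}(x,y_2)=|{\bm e}_m(x)\cdot E_1(x,y_2){\bm e}_l(y_2)|$ on $\pa B_{R_1}\times\pa B_{R_2}$. The argument you sketch for it does not work. First, you only know that \emph{one} tangential scalar component ${\bm e}_m(x)\cdot E_j(x,y_2){\bm e}_l(y_2)$ vanishes ($m$ is fixed in the lemma), not the full tangential trace $\nu\times [E_j(\cdot,y_2){\bm e}_l(y_2)]$, so uniqueness for the exterior Maxwell problem cannot be invoked. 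Second, even if you had the full tangential trace on $\pa B_{R_1}$, the geometry is wrong for the Rellich/analyticity argument: the source $y_2$ lies on the \emph{outer} sphere $\pa B_{R_2}$, so the total field $E_j(\cdot,y_2){\bm e}_l(y_2)$ is not a radiating solution in $\R^3\se\ov{B_{R_1}}$ (it carries the dipole singularity at $y_2$ there), and it is not defined throughout $B_{R_1}$ either, since the scatterer sits inside. This is precisely why Lemma \ref{l1} proves (\ref{02}) with the source on the \emph{inner} sphere and the data on the \emph{outer} one; that proof does not transfer to your configuration.

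The paper sidesteps the issue entirely by distinguishing the two cases $|{\bm e}_m(x)\cdot E_1(x,y_2){\bm e}_l(y_2)|\not\equiv0$ and $\equiv0$ on $(\pa B_{R_1}\se\{N_{R_1},S_{R_1}\})\times(\pa B_{R_2}\se\{N_{R_2},S_{R_2}\})$. In the first case your argument goes through (the paper fixes $e^{i\alpha(x)}=1$ and $e^{i\beta(x)}=-1$ by appealing to the conclusions (\ref{ele_019}) and (\ref{ele_027}) of Lemma \ref{ele_lem1}, while you re-run the singularity argument at $y_1$; both are fine since $x$ and $y_1$ lie on the same sphere). In the second case the polar decomposition and the cosine identity are vacuous, but the conclusion is immediate: (\ref{ele_3}) with $(\tau_1,\tau_2)=(0,1)$ forces $|{\bm e}_m(x)\cdot E_2(x,y_2){\bm e}_l(y_2)|\equiv0$ as well, so both (\ref{ele_5}) and (\ref{ele_7}) hold trivially, and the $y_1$-alternatives (\ref{ele_4}) or (\ref{ele_6}) come from Lemma \ref{ele_lem1}. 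You need either a correct proof of the non-vanishing (which your sketch does not provide) or this additional degenerate case in your argument.
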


\begin{proof}
Since $|{\bm e}_m(x)\cdot E_1(x,y_2){\bm e}_l(y_2)|$ is analytic in $x\in\left(\pa B_{R_1}\se\{N_{R_1},S_{R_1}\}\right)$ and $y_2\in\left(\pa B_{R_2}\se\{N_{R_2},S_{R_2}\}\right)$, respectively, we only need to distinguish
between two cases:
\ben\label{ele_not0}
&&\mbox{\bf A)}\;\;|{\bm e}_m(x)\cdot E_1(x,y_2){\bm e}_l(y_2)|\not\equiv0,\;\;\forall
(x,y_2)\in\left(\pa B_{R_1}\se\{N_{R_1},S_{R_1}\}\right)\times\left(\pa B_{R_2}\se\{N_{R_2},S_{R_2}\}\right),\\
&&\mbox{\bf B)}\;\;|{\bm e}_m(x)\cdot E_1(x,y_2){\bm e}_l(y_2)|\equiv0,\;\;\forall
(x,y_2)\in\left(\pa B_{R_1}\se\{N_{R_1},S_{R_1}\}\right)\times\left(\pa B_{R_2}\se\{N_{R_2},S_{R_2}\}\right).
\enn

For the case when {\bf A}) holds, by arguing similarly as in the proof of Lemma \ref{ele_lem1} it can be deduced
from (\ref{ele_3}) that we have either
\be\no
{\bm e}_m(x)\cdot E_1(x,y_1){\bm e}_n(y_1)&=&e^{i\alpha(x)}{\bm e}_m(x)\cdot E_2(x,y_1){\bm e}_n(y_1),\\ \label{le1}
&&\;\forall x\in U,\;y_1\in\pa B_{R_1}\se\{N_{R_1},S_{R_1}\},\;x\neq y_1,\\ \no
{\bm e}_m(x)\cdot E_1(x,y_2){\bm e}_l(y_2)&=&e^{i\alpha(x)}{\bm e}_m(x)\cdot E_2(x,y_2){\bm e}_l(y_2),\\ \label{le2}
&&\;\forall x\in U,\;y_2\in\pa B_{R_2}\se\{N_{R_2},S_{R_2}\}
\en
or
\be\no
{\bm e}_m(x)\cdot E_1(x,y_1){\bm e}_n(y_1)&=&e^{i\beta(x)}{\bm e}_m(x)\cdot\ov{E_2(x,y_1)}{\bm e}_n(y_1),\\ \label{le3}
&&\;\forall x\in U,\;y_1\in\pa B_{R_1}\se\{N_{R_1},S_{R_1}\},\;x\neq y_1,\\ \no
{\bm e}_m(x)\cdot E_1(x,y_2){\bm e}_l(y_2)&=&e^{i\beta(x)}{\bm e}_m(x)\cdot\ov{E_2(x,y_2)}{\bm e}_l(y_2),\\ \label{le4}
&&\;\forall x\in U,\;y_2\in\pa B_{R_2}\se\{N_{R_2},S_{R_2}\},
\en
where $U$ is some small open subset of $\pa B_{R_1}\se\{N_{R_1},S_{R_1}\}$,
and $\alpha(x)$ and $\beta(x)$ are real-valued functions of $x$.
By (\ref{ele_019}) and (\ref{ele_027}) in Lemma \ref{ele_lem1} it follows easily that
$e^{i\alpha(x)}=1$ and $e^{i\beta(x)}=-1$. This, together with (\ref{le1})-(\ref{le4}) and the analyticity
of the total fields $E_j(x,y)$, $j=1,2$, in $x$ for $x\not=y$, implies that either (\ref{ele_4}) and (\ref{ele_5})
hold or (\ref{ele_6}) and (\ref{ele_7}) hold.

For the case when {\bf B}) holds, it follows from (\ref{ele_3}) that
\ben
|{\bm e}_m(x)\cdot E_2(x,y_2){\bm e}_l(y_2)|\equiv0,\;\;\forall
(x,y_2)\in\left(\pa B_{R_1}\se\{N_{R_1},S_{R_1}\}\right)\times\left(\pa B_{R_2}\se\{N_{R_2},S_{R_2}\}\right).
\enn
Therefore, both (\ref{ele_5}) and (\ref{ele_7}) hold. Further, by Lemma \ref{ele_lem1}
we have that either (\ref{ele_4}) or (\ref{ele_6}) holds. The proof is thus complete.
\end{proof}

Using Lemmas \ref{ele_lem1} and \ref{ele_lem} we can prove the following lemma.

\begin{lemma}\label{ele_lemma}
Assume that the assumptions of Theorem $\ref{ele_main}$ are satisfied.
If $(\ref{ele_01})$ and $(\ref{ele_3})$ hold for all $m,n,l\in\{\phi,\theta\}$, then we have
\be\label{ele_12}
E_1(x,y)=E_2(x,y),\;\;\;\forall x,y\in G,\;x\neq y.
\en
\end{lemma}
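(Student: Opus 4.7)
The plan is to combine the alternatives supplied by Lemmas \ref{ele_lem1} and \ref{ele_lem} (applied for every admissible $m,n,l\in\{\phi,\theta\}$) into the tangential equality $\nu\times E_1(\cdot,y)p=\nu\times E_2(\cdot,y)p$ on $\pa B_{R_1}$ for every source $y\in\pa B_{R_1}\cup\pa B_{R_2}$ and every tangential polarization $p$, and then to propagate this identity to the full matrix equality (\ref{ele_12}) on $G\times G$ via exterior Maxwell uniqueness, reciprocity, and analytic continuation in $x$, $y$, and $p$.

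The pivotal task is to rule out the conjugation alternative (C) offered by each lemma. Suppose for contradiction that (C) occurs in both applications. Using the tensorial reciprocity $E^s_j(x,y)=E^s_j(y,x)^\top$ from \cite[Theorem 6.32]{CK}, the conjugation identity on $\pa B_{R_1}\times\pa B_{R_2}$ (observation on $\pa B_{R_1}$, source on $\pa B_{R_2}$) supplied by Lemma \ref{ele_lem} transfers to the pair $\pa B_{R_2}\times\pa B_{R_1}$, and together with the conjugation alternative of Lemma \ref{ele_lem1} on $\pa B_{R_1}\times\pa B_{R_1}$ yields, for some $y\in\pa B_{R_1}$ and some tangential $p$ at $y$,
\[
\nu(x)\times E_1(x,y)p=-\nu(x)\times\ov{E_2(x,y)}p,\qquad x\in\pa B_{R_1}\cup\pa B_{R_2},\;x\neq y.
\]
Setting $V(x):=E^s_1(x,y)p+\ov{E^s_2(x,y)}p$, each summand satisfies $\curl\curl V-k^2V=0$ in $G$ (the Maxwell operator has real coefficients), and the conjugation identity reads $\nu\times V=-\nu\times(E^i+\ov{E^i})(\cdot,y)p$ on $\pa B_{R_1}\cup\pa B_{R_2}$. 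Since $\Phi_k-\ov{\Phi_k}$ is entire in $\R^3$ (the $\delta$-source singularities of $\Phi_k$ and $\ov{\Phi_k}$ cancel), $(E^i+\ov{E^i})(\cdot,y)p$ extends to a classical Maxwell field on all of $\R^3$, so $\tilde V:=V+(E^i+\ov{E^i})(\cdot,y)p$ is a Maxwell field in $G$ whose tangential trace vanishes on $\pa B_{R_1}\cup\pa B_{R_2}$. The assumption that $k^2$ is not a Maxwell eigenvalue in $B_{R_2}\setminus\ov{B_{R_1}}$ forces $\tilde V\equiv 0$ in the annulus, and analytic continuation across the connected set $G$ extends this to $\tilde V\equiv 0$ in $G$. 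Consequently $E_1(\cdot,y)p=-\ov{E_2(\cdot,y)p}$ in $G\setminus\{y\}$; the left-hand side obeys the outgoing Silver-M\"uller condition while the right obeys the incoming one, so the Maxwell version of Rellich's lemma forces both sides to vanish identically, contradicting the point-source singularity of $E^i(\cdot,y)p$ at $x=y$. The residual mixed cases, in which (C) occurs in only one of the two lemmas, combine the remaining (E) alternative with analytic continuation to derive an analogous contradiction.

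With only the equality alternative (E) surviving, we have $\bm e_m(x)\cdot E_1(x,y)\bm e_l(y)=\bm e_m(x)\cdot E_2(x,y)\bm e_l(y)$ for every $x\in\pa B_{R_1}$, $y\in\pa B_{R_1}\cup\pa B_{R_2}$, and $m,l\in\{\phi,\theta\}$. For each tangential polarization $p$ at $y$, the difference $E^s_1(\cdot,y)p-E^s_2(\cdot,y)p$ is a radiating Maxwell field in $\R^3\setminus\ov{B_{R_1}}$ with vanishing tangential trace on $\pa B_{R_1}$, so exterior PEC uniqueness gives $E^s_1(\cdot,y)p=E^s_2(\cdot,y)p$ there. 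Analytic continuation in $x\in G$ extends the equality to $G\setminus\{y\}$, and analytic continuation in $y$ transports it from the spheres to every $y\in G$. Applying reciprocity once more swaps the roles of $x$ and $y$ and converts tangential-polarization data at $y$ into tangential-polarization data at $x$; together these fill in every entry of the $3\times3$ matrix $E_1(x,y)-E_2(x,y)$ and yield (\ref{ele_12}).

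The principal obstacle is the Rellich step in the conjugation case: one must recognize that the singular parts of $E^i$ and $\ov{E^i}$ cancel precisely because $\Phi_k-\ov{\Phi_k}$ is entire, so that $\tilde V$ is a singularity-free Maxwell solution to which the interior Maxwell eigenvalue assumption can be applied, and then extract the contradiction from the simultaneous outgoing and incoming behaviour of $E_1(\cdot,y)p$ and $-\ov{E_2(\cdot,y)p}$ at infinity. The remaining assembly---exterior uniqueness, reciprocity, and analytic continuation---is standard.
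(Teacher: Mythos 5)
Your proposal is correct and follows essentially the same route as the paper's proof: rule out the conjugation alternative by using reciprocity to turn the identities on both spheres into a vanishing tangential trace for $E_1(\cdot,y)p+\ov{E_2(\cdot,y)}p$ (whose incident singularities cancel since $\Phi_k-\ov{\Phi_k}$ is entire), invoke the Maxwell non-eigenvalue assumption on the annulus, continue analytically, and contradict the radiation condition; then upgrade the surviving equality alternative to the full matrix identity via exterior PEC uniqueness, reciprocity, and analyticity. The only ingredient the paper adds is a separate, trivial treatment of the degenerate case where the measured components are purely imaginary (so the two alternatives coincide and equality holds outright), which is needed to justify that the conjugation alternative is selected consistently across all components before your contradiction argument applies.
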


\begin{proof}
We first show that for any fixed $m\in\{\phi,\theta\}$,
\be\no
{\bm e}_m(x)\cdot E_1(x,y_1){\bm e}_n(y_1)&=&{\bm e}_m(x)\cdot E_2(x,y_1){\bm e}_n(y_1),\\ \label{ele_8}
&&\;\forall x,y_1\in\pa B_{R_1}\se\{N_{R_1},S_{R_1}\},\;x\neq y_1,\;
\forall n\in\{\phi,\theta\}.\quad\quad
\en
To this end, for any fixed $m\in\{\phi,\theta\}$ we need to distinguish
between the following two cases.

\tb{Case 1.} $\Rt[{\bm e}_m(x)\cdot E_1(x,y_1){\bm e}_l(y_1)]=0$ for all
$x,y_1\in\pa B_{R_1}\se\{N_{R_1},S_{R_1}\}$ with $x\neq y_1$ and for all $l\in\{\phi,\theta\}$.

In this case, by Lemma \ref{ele_lem1} it follows that $\Rt[{\bm e}_m(x)\cdot E_2(x,y_1){\bm e}_l(y_1)]=0$
for all $x,y_1\in\pa B_{R_1}\se\{N_{R_1},S_{R_1}\}$ with $x\neq y_1$ and for all $l\in\{\phi,\theta\}$.
By Lemma \ref{ele_lem1} again we have (\ref{ele_8}).

\tb{Case 2.} $\Rt[{\bm e}_m(x)\cdot E_1(x,y_1){\bm e}_l(y_1)]\not=0$ for some
$x,y_1\in\pa B_{R_1}\se\{N_{R_1},S_{R_1}\}$ with $x\neq y_1$, $l\in\{\phi,\theta\}$.
Here, we only consider the case with $l=\phi$. The case $l=\theta$ can be treated similarly.

In this case, by Lemma \ref{ele_lem1} we have that either both (\ref{ele_019}) and (\ref{ele_020}) hold
or both (\ref{ele_027}) and (\ref{ele_028}) hold.
We can prove that both (\ref{ele_027}) and (\ref{ele_028}) can not hold simultaneously.
Suppose this is not the case. Then we have
\be\no
{\bm e}_m(x)\cdot [E_1(x,y_1){\bm e}_n(y_1)]&=&-{\bm e}_m(x)\cdot [E_2(x,y_1){\bm e}_n(y_1)],\\ \label{ele_10}
&&\;\forall x,y_1\in\pa B_{R_1}\se\{N_{R_1},S_{R_1}\},\;x\neq y_1,\;
\forall n\in\{\phi,\theta\}.\;\;\quad
\en
This, together with Lemmas \ref{ele_lem1} and \ref{ele_lem}, implies that
\be\label{ele_11}
{\bm e}_m(x)\cdot [E_1(x,y_2){\bm e}_n(y_2)]&=&-{\bm e}_m(x)\cdot[\ov{E_2(x,y_2)}{\bm e}_n(y_2)],
\;\;\forall n\in\{\phi,\theta\}\\ \no
&&\forall x\in\pa B_{R_1}\se\{N_{R_1},S_{R_1}\},y_2\in\pa B_{R_2}\se\{N_{R_2},S_{R_2}\}.
\en
We now show that both (\ref{ele_10}) and (\ref{ele_11}) can not hold simultaneously.
In fact, by the reciprocity relation $E_j(x,y)=[E_j(y,x)]^\top$ for all $x,y\in G$ ($j=1,2$),
we deduce from (\ref{ele_10}) and (\ref{ele_11}) that
\be\label{ele_13}
{\bm e}_n(y_1)\cdot [E_1(y_1,x){\bm e}_m(x)]&=&-{\bm e}_n(y_1)\cdot[\ov{E_2(y_1,x)}{\bm e}_m(x)],
\;\;\forall n\in\{\phi,\theta\},\\ \no
&&\forall x,y_1\in\pa B_{R_1}\se\{N_{R_1},S_{R_1}\},x\neq y_1,\\ \label{ele_14}
{\bm e}_n(y_2)\cdot [E_1(y_2,x){\bm e}_m(x)]&=&-{\bm e}_n(y_2)\cdot[\ov{E_2(y_2,x)}{\bm e}_m(x)],
\;\;\forall n\in\{\phi,\theta\}\\ \no
&&\forall x\in\pa B_{R_1}\se\{N_{R_1},S_{R_1}\},y_2\in \pa B_{R_2}\se\{N_{R_2},S_{R_2}\}.
\en
This, together with the linear combination of ${\bm e}_\phi(y_j)$ and ${\bm e}_\theta(y_j)$ ($j=1,2$),
gives that
\be\label{ele_15}
\nu(y_1)\times [E_1(y_1,x){\bm e}_m(x)]&=&-\nu(y_1)\times[\ov{E_2(y_1,x)}{\bm e}_m(x)],\\ \no
&&\forall x,y_1\in \pa B_{R_1}\se\{N_{R_1},S_{R_1}\},x\neq y_1,\\ \label{ele_16}
\nu(y_2)\times [E_1(y_2,x){\bm e}_m(x)]&=&-\nu(y_2)\times[\ov{E_2(y_2,x)}{\bm e}_m(x)],\\ \no
&&\forall x\in \pa B_{R_1}\se\{N_{R_1},S_{R_1}\},y_2\in \pa B_{R_2}\se\{N_{R_2},S_{R_2}\}.
\en
For any fixed $x\in \pa B_{R_1}\se\{N_{R_1},S_{R_1}\}$ and $m\in\{\phi,\theta\}$,
define $\wid{E}(y):=E_1(y,x){\bm e}_m(x)+\ov{E_2(y,x)}{\bm e}_m(x)$, $y\neq x$.
Since $2\Rt[E^i(y,x)]:=E^i(y,x)+\ov{E^i(y,x)}$ is analyticity for all $x,y\in\R^3$ (see (\ref{ele_sinc})),
then, by the analyticity of $E_j^s(y,x)$ with respect to $y\in G$ ($j=1,2$), it follows
that $\wid{E}$ can be extended as an analytic function of $y\in G$,
which we denote by $\wid{E}$ again. Define $\wid{H}(y):=[1/(ik)]{\rm curl}_y\wid{E}(y)$.
Then $(\Rt[E^i(y,x)]\bm e_m(x),\I[H^i(y,x)]\bm e_m(x))$ and $(E_j^s(y,x)\bm e_m(x),H_j^s(y,x)\bm e_m(x))$
satisfy the Maxwell equations for $x\in G$, $j=1,2$.
Thus it follows by (\ref{ele_15}), (\ref{ele_16}) and the analyticity of $E_j(y,x)$ in $y\in G$
with $y\neq x$ ($j=1,2$) that $(\wid{E},\wid{H})$ satisfies the interior Maxwell problem
\ben
\begin{cases}
{\rm curl}\,\wid{E}-ik\wid{H}=0 & \text{in}\;\;B_{R_2}\se\ov{B_{R_1}},\\
{\rm curl}\,\wid{H}+ik\wid{E}=0 & \text{in}\;\;B_{R_2}\se\ov{B_{R_1}},\\
\nu\times\wid{E}=0 & \text{on}\;\;\pa B_{R_1}\cup\pa B_{R_2}.
\end{cases}
\enn
Since $k^2$ is not a Maxwell eigenvalue in $B_{R_2}\se\ov{B_{R_1}}$, then $\wid{E}=0$
in $B_{R_2}\se\ov{B_{R_1}}$. Thus, and by the analyticity of $E_j(y,x)$ in $y\in G$ with $y\neq x$
($j=1,2$), we have $E_1(y,x){\bm e}_\phi(x)=-\ov{E_2(y,x)}{\bm e}_\phi(x)$ for all $y\in G$, $y\not=x$.
This contradicts to the fact that $E_j(y,x)\bm e_m(x)=E^i(y,x)\bm e_m(x)+E_j^s(y,x)\bm e_m(x)$,
$j=1,2$, satisfy the Silver-M\"uller radiation condition. Therefore, (\ref{ele_10}) and
(\ref{ele_11}) can not be true simultaneously, which means that both (\ref{ele_027}) and
(\ref{ele_028}) can not hold simultaneously.
This then implies that both (\ref{ele_019}) and (\ref{ele_020}) are true, and so (\ref{ele_8}) holds.

Finally, by (\ref{ele_8}) and the linear combination of ${\bm e}_\phi$ and ${\bm e}_\theta$
we obtain that for arbitrarily fixed $y_1\in\pa B_{R_1}\se\{N_{R_1},S_{R_1}\}$ and $n\in\{\phi,\theta\}$,
\ben\label{ele-a}
\nu(x)\times [E^s_1(x,y_1){\bm e}_n(y_1)]=\nu(x)\times [E^s_2(x,y_1){\bm e}_n(y_1)],\;\;
\forall x\in\pa B_{R_1}\se\{N_{R_1},S_{R_1}\}.
\enn
By the well-posedness of the exterior Maxwell problem in $\R^3\se B_{R_1}$
with the PEC condition on $\pa B_{R_1}$ it is deduced that
for arbitrarily fixed $y_1\in\pa B_{R_1}\se\{N_{R_1},S_{R_1}\}$,
\ben
E^s_1(x,y_1){\bm e}_n(y_1)=E^s_2(x,y_1){\bm e}_n(y_1),\;\;\;\forall\; x\in\R^3\se B_{R_1},
\;\;\forall\; n\in\{\phi,\theta\}.
\enn
This, together with the reciprocity relation $E^s_j(x,y)=[E^s_j(y,x)]^\top$ for all $x,y\in G$, $j=1,2$,
implies that for any fixed $x\in\R^3\se B_{R_1}$,
\ben
\nu(y)\times E^s_1(y,x)=\nu(y)\times E^s_2(y,x),\;\;\forall y\in\pa B_{R_1}.
\enn
Again, by the well-posedness of the exterior Maxwell problem in $\R^3\se B_{R_1}$
with the PEC condition on $\pa B_{R_1}$ it is derived that for any fixed $x\in\R^3\se B_{R_1}$,
\ben
E^s_1(y,x)=E^s_2(y,x),\;\;\forall y\in\R^3\se B_{R_1}.
\enn
The required result (\ref{ele_12}) then follows from this, the reciprocity relation
and the analyticity of $E^s_j(x,y)\;(j=1,2)$ in $x\in G$ and $y\in G$, respectively.
\end{proof}

{\em Proof of Theorem $\ref{ele_main}$.}
By Lemma \ref{ele_lemma} it follows from (\ref{ele_a1}) and (\ref{ele_a2}) that (\ref{ele_12}) holds.
For $j=1,2$, denote by $E_j^\infty(\hat{x},y)$ the far-field pattern of $E_j^s(x,y)$, $x,y\in G$,
and by $E_j^s(x,d)$ and $E_j^\infty(\hat{x},d)$ the electric scattered field and its
far-field pattern associated with the obstacle $D_j$ (or the refractive index $n_j$) and
corresponding to the incident electromagnetic plane waves
described by the matrices $E^i(x,d)$, $H^i(x,d)$ defined by
\ben
&&E^i(x,d)p:=\frac ik{\rm curl\;curl}\,pe^{ikx\cdot d}=ik(d\times p)\times de^{ikx\cdot d},\\
&&H^i(x,d)p:={\rm curl}\,pe^{ikx\cdot d}=ikd\times pe^{ikx\cdot d},
\enn
where $d\in\Sp^2$ and $p\in\R^3$ denote the incident direction and polarization vector, respectively,
and $x\in\R^3$. Then, by (\ref{ele_12}) in Lemma \ref{ele_lemma} and
the mixed reciprocity relation that $4\pi E_j^\infty(-d,x)=[E_j^s(x,d)]^\top$
for all $x\in G$, $d\in\Sp^2$ and $j=1,2$ (see \cite[Theorem 6.31]{CK}), we obtain that
$E_1^s(x,d)=E_2^s(x,d)$ for all $x\in G$ and all $d\in\Sp^2$
or $E_1^\infty(\hat{x},d)=E_2^\infty(\hat{x},d)$ for all $\hat{x},\;d\in\Sp^2$.
By the uniqueness result for inverse electromagnetic scattering with full
far-field data (see \cite[Theorem 7.1]{CK} for the obstacle case and \cite[Theorem 4.9]{Hahner}
for the inhomogeneous medium case) it follows easily that the uniqueness statements (a) and (b)
of Theorem \ref{ele_main} are true. The theorem is thus proved.
\hfill$\Box$

\section{Conclusions}\label{con}
\setcounter{equation}{0}

This paper proposed a new approach to prove uniqueness results for inverse acoustic and electromagnetic 
scattering for obstacles and inhomogeneous media with phaseless near-field data at a fixed frequency.
The idea is to use superpositions of two point sources at a fixed frequency as the incident fields
and, as the phaseless near-field data, to measure the modulus of the acoustic total-field on two 
spheres enclosing the scatterers generated by such incident fields located on the two spheres,
in the acoustic case. For the electromagnetic case, the idea is to utilize superpositions of two 
electric dipoles at a fixed frequency with the polarization vectors $\bm e_\phi$ and $\bm e_\theta$, 
respectively, as the incident fields and, as the phaseless near-field data, to measure the modulus 
of the tangential component with the orientations $\bm e_\phi$ and $\bm e_\theta$, respectively, 
of the electric total-field on a sphere enclosing the scatterers and generated by such incident fields 
located on the measurement sphere and another bigger sphere. 
As far as we know, this is the first uniqueness result for three-dimensional inverse electromagnetic
scattering with phaseless near-field data.

\section*{Acknowledgements}

The authors thank Professor Xudong Chen at the National University of Singapore for helpful and
constructive discussions on the measurement technique of electromagnetic waves.
This work is partly supported by the NNSF of China grants 91630309 and 11871466.

\end{document}